\title{Explicit Constructions for Poncelet Polygons
}
\author{ \it Leah Wrenn Berman, Gábor Gévay, \\ \it Jürgen Richter-Gebert, Serge Tabachnikov}
\newtheorem*{theorem*}{Theorem}
\newtheorem*{theoremA*}{Theorem A}
\newtheorem*{theoremB*}{Theorem B}
\newtheorem*{theoremC*}{Theorem C}
\newtheorem{construction}{Construction}
\newtheorem{theorem}{Theorem}
\newtheorem{remark}{Remark}
\date{\vspace{-2ex}}
\begin{document}
\newpage
\maketitle





\def\upto{, \ldots ,}
\def\lines{\square}
\def\points{\bigcirc}
\def\lines{\vee}
\def\points{\wedge}
\parskip=1mm
\setlength{\parindent}{.5cm}

\begin{abstract}
We study the geometric structure of  Poncelet $n$-gons from a projective point of view. In particular we present explicit constructions of Poncelet $n$-gons for certain $n$ and derive algebraic characterisations in terms of bracket polynomials. Via the derivations in  \cite{ BGRGT24a} that connect
Poncelet polygons to movable $(N_4)$-configurations, the results of this article can be used to construct a large class of specific movable $(N_4)$-configurations, the \emph{trivial celestial 4-configurations}, which up to this point were all thought to be rigid and to require regular polygons for their construction.

\end{abstract}

\tableofcontents

\parskip=2mm

\section{Introduction}
Poncelet's Porism is one of the oldest Theorems in projective geometry, and it has deep connections to to several areas of mathematics such as elliptic functions, dynamical systems, billiard reflections and many more. 
In \cite{ BGRGT24a} we showed that there is also a relation
to the theory of $(N_4)$-configurations. These are configurations
with $n$ points and $n$ lines having 4 points on each line and 4 lines through each point.
 Poncelet polygons
can be used to create large classes of movable $(N_4)$-configurations. In a sense, these configurations inherit their flexibility from the underlying Poncelet polygon.

\medskip
In \cite{ BGRGT24a} the main relation between the two concepts was established.
In the present article we study the question of geometric constructions and algebraic characterisations of Poncelet polygons. This generates explicit geometric constructions for certain $(N_4)$ configurations. We cannot expect too much since the creation of Poncelet polygons may lead to the necessity of solving algebraic equations of potentially high degree.

\noindent
In what follows we will present geometric constrictions for
\begin{itemize}
\item a Poncelet 6-gon (Section 4.1)
\item a Poncelet 7-gon (Section 4.2)
\item a Poncelet 8-gon (Section 4.3)
\item a Poncelet $2n$-gon from a Poncelet $n$-gon  (Section 4.4)
\item a Poncelet 9-gon  (Section 4.5)
\item a Poncelet $\infty$-gon (Section 4.6)
\end{itemize}
Poncelet 5-gons can be trivially constructed, because five points as well as five tangents uniquely determine a conic.  Hence every five distinct points on a conic form a Poncelet 5-gon.

Furthermore we will give algebraic characterizations in terms of 
bracket polynomials (that underline the projectively invariant character of the topic) for
\begin{itemize}
\item Poncelet chains (Section 3.1)
\item Poncelet 7-gons (Section 3.2)
\item Poncelet 8-gons (Section 4.2)
\item Poncelet 9-gons  (Section 4.4)
\end{itemize}

The smallest Poncelet polygon for which we do not have an explicit geometric construction is a Poncelet 11-gon; determining this construction would require solving an equation of degree 5.  A collection of interactive animations related to this article can be found at:  \href{https://mathvisuals.org/Poncelet/}{\tt https://mathvisuals.org/Poncelet/}.

\section{Poncelet's Porism and $(N_4)$-configurations}

Poncelets Porism is a theorem about a chain of points and lines that simultaneously circumscribe one conic and inscribe another:

\noindent
{\bf Poncelet's Porism:\ }{\it
Let $\mathcal{A}$ and $\mathcal{B}$ be two conics in the projective plane.
If $(p_1\upto p_n)$ is a polygon whose vertices lie on $\mathcal{A}$  and whose edges are tangent to $\mathcal{B}$, then there exists such a  polygon starting with an arbitrary point on $\mathcal{A}$.
}

\begin{figure}[H]
\begin{center}
\includegraphics[width=0.9\textwidth]{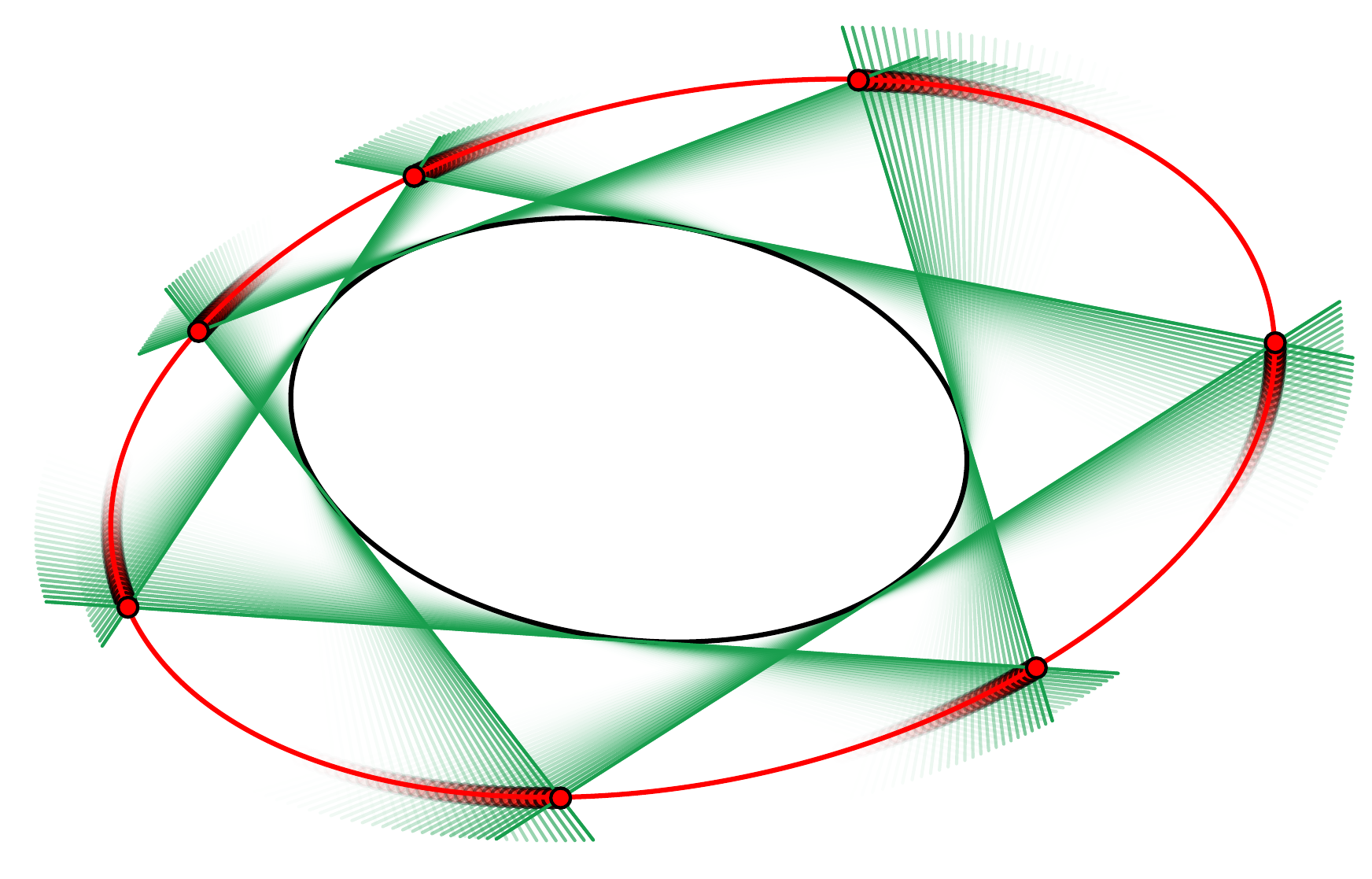}\
\begin{picture}(0,0)
\end{picture}
\end{center}
\captionof{figure}{Poncelet's Theorem.}\label{fig:Mov1}
\end{figure}

If the two conics  $\mathcal{A}$ and $\mathcal{B}$ are given we can consider a Poncelet Polygon as arising from the following process. 
We start with a point $p_1$ on  $\mathcal{A}$ and a tangent  $l_1$ to $\mathcal{B}$
through $p_1$. Then we choose the the  intersection of  $l_1$ with $\mathcal{A}$
other than $p_1$ and call it $p_2$. Through $p_2$ we draw the tangent to $\mathcal{B}$ other than $l_1$ and proceed iteratively.
Such a sequence of points $p_1,p_2,p_3,\ldots$ is called a Poncelet chain.
If the chain closes up after $n$ steps we call $p_1,p_2,\ldots,p_{n}$
a Poncelet $n$-gon.

The main statement of Poncelet's Porism is that if the construction closes up after $n$ steps for one starting point, then it will close up after $n$ steps for any starting point.
Figure \ref{fig:Mov1} illustrates the situation for  a star 7-gon.
A dynamic way to think about this theorem is to consider $p_1$
as gliding on the outer conic and by this creating a continuous motion of 
 the Poncelet polygon.

In  \cite{ BGRGT24a}  we showed that whenever we start with a Poncelet $n$ gon (with $n\geq 7$) its points can be used to construct$((k\cdot n)_4)$-configurations.
There we explicitly described the combinatorial
possibilities of getting such configurations. For details we refer to this companion article. The constriction can be considered as
starting with a Poncelet Polygon and then 
creating nested rings of certain star polygons.
Figure \ref{fig:Mov2} illustrates the resulting configuration for 
a Poncelet heptagon. The flexibility of the Poncelet configuration induces a flexibility on the resulting $((k\cdot n)_4)$-configuration.

\begin{figure}[H]
\begin{center}
\qquad\includegraphics[width=0.95\textwidth]{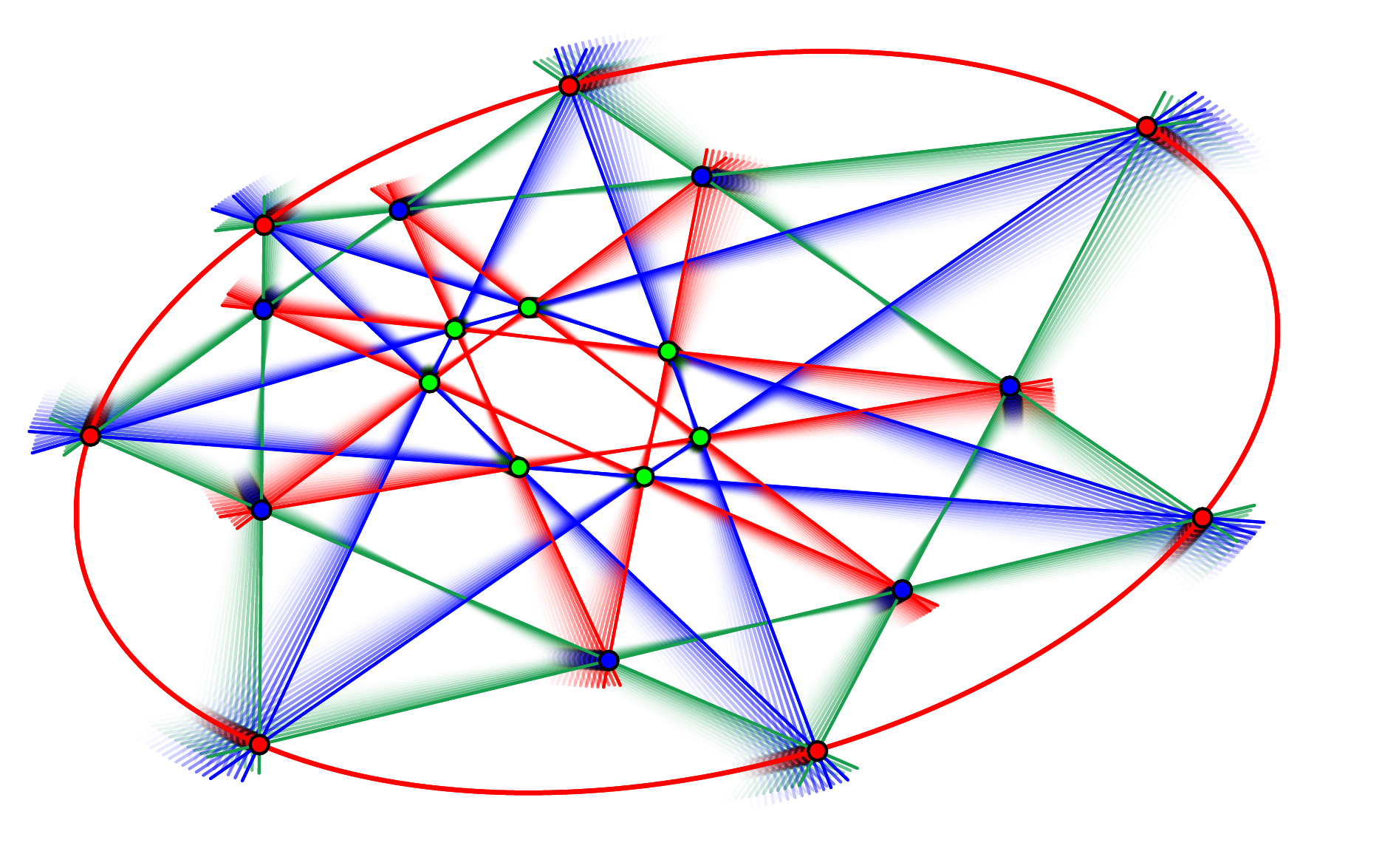}\
\begin{picture}(0,0)
\end{picture}
\end{center}
\captionof{figure}{An $(N_4)$-configuration in motion.}\label{fig:Mov2}
\end{figure}

If the points of the Poncelet $n$-gon are given, the corresponding 
$((k\cdot n)_4)$-configurations can be constructed by using join and meet operations only. Thus, providing geometric constructions for a Poncelet $n$-gon automatically 
leads to geometric construction of the corresponding $((k\cdot n)_4)$-configurations.

\subsection{Geometric primitives}
One of the aims of this article is to provide 
specific geometric constructions that lead to Poncelet $n$-gons for certain $n$. For this we must specify which construction tools exactly are allowed in that process.
For each geometric operation there is an algebraic counterpart.  To set the ground, we here specify the objects we will deal with and the relations between them that are relevant 
in our context. All our configurations may be interpreted in the real projective plane $\mathbb{RP}^2$ or in the complex projective plane $\mathbb{CP}^2$. We follow the standard 
treatment of geometric objects here. For now let $\mathbb{K}$ be either  $\mathbb{R}$ or  $\mathbb{C}$.   For the algebraic setup we follow  \cite{RG11} as well as \cite{Cox92,Cox94}.

\begin{itemize}
\item[]{\it Points} in the plane will be represented by 3-dimensional homogeneous coordinates 
$(x,y,z)^T\in \mathbb{K}^3-\{\mathbf{0}\}$. 
Vectors that differ only by a non-zero scalar multiple represent the same point.
\item[]{\it Lines} in the plane will also be represented by homogeneous coordinates $(a,b,c)^T\in \mathbb{K}^3-\{\mathbf{0}\}$. Vectors that differ only by a non-zero scalar multiple represent the same line.
\item[]{\it Incidence} between a point $p$ and a line $l$ arises when $p^Tl=0$. By this we can define operations {\it join} and {\it meet} via $l=p\times q$ for two points $p$ and $q$ and via
$p=l\times m$ {for two lines $l$ and $m$, respectively}. Although these two operations are algebraically identical, we represent them by different symbols. The join is written as $p\vee q$
and the meet as $l\wedge m$. 
\item[]{\it Conics} will be represented by symmetric $3\times 3$ matrices $\mathcal{A}$
that represent a quadratic form $p^T\mathcal{A}p$. A point $p$ is on the conic 
$\mathcal{A}$ if the quadratic form evaluates to zero. Matrices that differ only by a scalar multiple represent the same conic. Here we will only deal with non-degenerate conics. 
They satisfy $\det{\mathcal{A}}\neq 0$.
\item[]{\it Tangency} of a line $l$ and a non-degenerate  conic $\mathcal{A}$ is attained when $l^T\mathcal{A}^{-1}l=0$. 
We call $\mathcal{A}^{-1}$ the dual conic of $\mathcal{A}$.
\item[]{\it Intersections of lines with conics} can be calculated by solving algebraic equations. A line $l$ intersects a conic  
$\mathcal{A}$
in those points that simultaneously satisfy $p^T\mathcal{A}p=0$ and $p^Tl=0$. This requires solving a quadratic 
equation. 
\item[]{\it Tangents from a point to a conic} can also be calculated by solving algebraic equations. 
\item[]{\it Intersections of two conics} in general require the solution of a cubic equation. In \cite[Section 11.4]{RG11}  a specific method is given that performs this geometric operation. We will use it as a black box here.
\item[]{\it Projective transformations in the plane} map four points (in general position) to four points (in general position). They are represented by non-degenerate $3\times 3$ matrices $S$. 
A point $p$, a line $l$ and a conic $\mathcal{A}$ get mapped to 
$Sp$, $(S^{-1})^Tl$, $(S^{-1})^T\mathcal{A}S^{-1}$, respectively. 

\end{itemize}

\section{Bracket Polynomials for Poncelet chains} \label{sect:BracketPoly}

In this section we want to derive short algebraic conditions that encode  projective relations in the context of Poncelet polygons. 
Let $\mathcal{A}$ and $\mathcal{B}$ be two conics and $p_0\in \mathcal{A}$ a starting point for a Poncelet polygon with inscribed  conic $\mathcal{B}$ and circumscribed conic $\mathcal{A}$.
Recall that we call the sequence $p_1,p_2,p_3,p_4,\ldots$ that arises from successively forming tangents to $\mathcal{B}$ and intersections with $\mathcal{A}$ a {\it Poncelet chain}. 
If this sequence closes after $n$ steps and then traverses the same sequence of points, we call it a Poncelet $n$-gon (i.e.\ we have $p_i=p_{n+i}$ for all $i\geq 1$). 
If all points in a Poncelet $n$-gon are distict, then we call it a proper Poncelet $n$-gon.
In particular, non-proper $n$-gons may arise if the chain closes for a number smaller than $n$. 
This may happen if $k$ is a divisor of $n$, and the sequence closes after $k$ steps. However, this is not the only reason for being non-proper. For example, for proper  $n$-chains we require that no two points coincide.
Being a Poncelet $n$-gon is a projectively invariant property of the points involved.
One might be tempted to assume that starting with another point  on the conic $\mathcal{A}$ will only produce a projectively equivalent Poncelet polygon, but this is not the case. 
In general the Poncelet $n$-gons arising from $\mathcal{A}$ and $\mathcal{B}$ will be not projectively equivalent and exactly this is what makes the Poncelet Porism fascinating and also difficult to prove.

Projectively invariant properties of point sets can (by the first fundamental theorem of projective invariant theory)  always be expressed
by polynomial expressions in determinants of the homogeneous coordinates of the points. We denote the determinant of the $3 \times 3$ matrix whose columns are the homogeneous coordinates of points $p$, $q$, $r$ by the bracket $[p, q, r]$.
For instance, the fact that 6 points $1,2,3,4,5,6$ lie on a conic may be expressed by the bracket equation
\[
[1,2,3][1,5,6][4,2,6][4,5,3]=[4,5,6][4,2,3][1,5,3][1,2,6].
\]

This is an equation that may be considered as a relation in the projective plane $\mathbb{RP}^2$.
For our purposes we can make life even simpler by expressing conditions on the projective line $\mathbb{RP}^1$.
In order to see this, make the following observation. If we have a Poncelet chain $p_1,p_2,\ldots$, then all points necessarily lie on a conic $\mathcal{A}$.
We can now stereographically project this conic to a line and identify points on the conic with the points on that line. 
The center of projection (which lies on the conic) thus becomes the point at infinity on the line. We can literally identify the set of all points on $\mathcal{A}$  with $\mathbb{RP}^1$. 
Projective transformations of  $\mathbb{RP}^2$ that leave $\mathcal{A}$ invariant as a whole result in projective transformations of the corresponding~$\mathbb{RP}^1$. 
For a detailed treatment, see \cite[{Chapter 10}]{RG11}.

\medskip

By this reasoning, the fact that $p_1,p_2,\ldots$ forms a Poncelet chain becomes a projectively invariant property of the points in the underlying $\mathbb{RP}^1$. 
As a consequence of the fundamental theorem of invariant theory, this condition must be expressible as a condition on the $2\times 2$ determinants between 
homogeneous coordinates of the points in that $\mathbb{RP}^1$, that is, by a bracket equation. Similarly, the condition that 7 points form a Poncelet heptagon must also lead to bracket equations. 
This is what we are targeting as our next aim.

\begin{figure}[H]
\begin{center}
\includegraphics[width=0.65\textwidth]{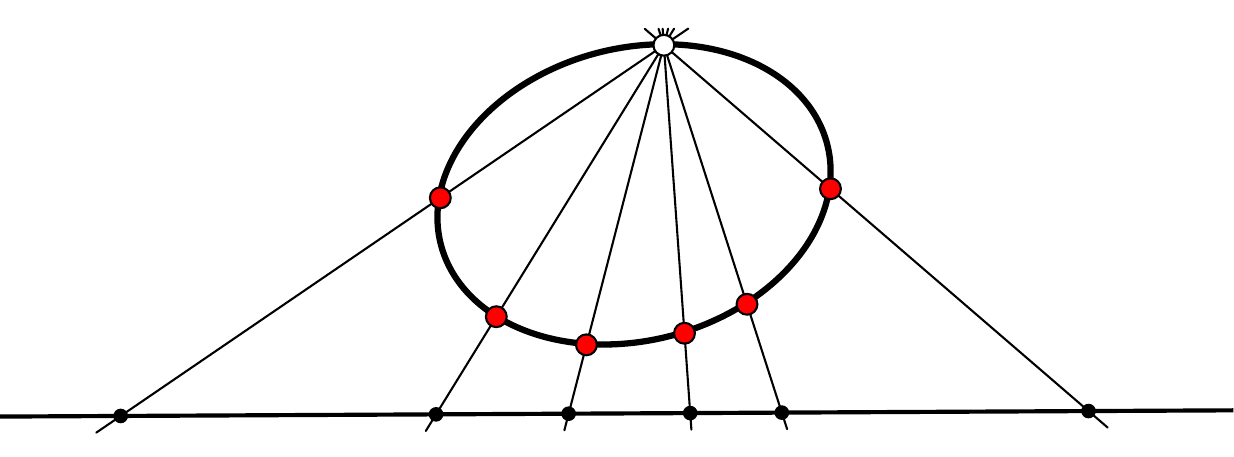}
\begin{picture}(0,0)
\put(-110,80){\footnotesize $\infty$}
\put(-73,50){\footnotesize $1$}
\put(-88,23){\footnotesize $2$}
\put(-99,15){\footnotesize $3$}
\put(-119,12){\footnotesize $4$}
\put(-139,15){\footnotesize $5$}
\put(-154,36){\footnotesize $6$}
\put(-20,10){\footnotesize $\mathbb{RP}^1$}

\end{picture}
\end{center}
\captionof{figure}{Stereographic projection of a conic to a line.}
\end{figure}

\subsection{Seven points forming a Poncelet Chain}

In what follows, for better readability, we will  identify points with their indices. So instead of $p_1$ we simply write $1$.
We will also identify points on a conic with the corresponding points in the related $\mathbb{RP}^1$. Furthermore, we denote
determinants of homogeneous coordinates of the corresponding points by a bracket $[\ldots]$ that lists the indices of the points. 
If no confusion can arise, we will even omit the commas to get a compact notation.
Thus, by the innocent abbreviation $[12]$ we mean the $2\times 2$ determinant of the points in the $\mathbb{RP}^1$
that are associated to two points labeled $p_1$ and $p_2$ on a conic. As such, these values are not well defined, and depend on the specifically chosen embedding. 
However, since we will only consider projectively invariant relations between points, the corresponding equations will be multihomogeneous 
bracket expressions in the points, and they turn out to be well-defined.

\medskip

For instance, the equation $[13][24]=-[14][23]$ represents the fact that $1,2,3,4$ are in harmonic position, and the value
$[13][24]/[14][23]$ 
corresponds to the cross ratio of the four points. 
Both expressions are projectively invariant, and will be well-defined in our setting (for details on this way of thinking, see \cite{RG11,RG95}).

Before we start, we recall the following fact, known as {\it Hesse's transfer pinciple} (see  \cite[\S 10.5 - 10.7]{RG11} for more details).
An ordered collection of points $(1,4: 2,5: 3,6)$ is called a {\it quadrilateral set} ({\it quadset}, for short)
if it arises from the intersection of that line with all lines that arise as the joins of 4 distinct points (see the lower part of Figure~\ref{fig:quad}). 
If five points in a quadset are given, the position of the sixth point is determined uniquely. 
Since `being a quadset' is invariant under projective transformations, it can be expressed by a corresponding bracket polynomial. 
It turns out that the corresponding algebraic expression for the points on the line is the relation
\begin{equation}\label{eq:quad}
[15][26][34]=[16][24][35].
\end{equation}

Now Hesse's transfer principle relates quadrilateral set relations to incidence properties of points on a conic. 
Given 6 points $1,2,3,4,5,6$ on a conic
with the property that the lines 14, 25, 36 meet in a point,  Hesse's transfer principle
implies that the point pairs $(1,4: 2,5: 3,6)$ form a quadset.
We will make use of that relation to derive our desired characterizations.

\medskip
The situation is illustrated in  Figure \ref{fig:quad} below. Six points on a conic are shown that are stereographically projected to a line.
The line plays the role of an $\mathbb{RP}^1$. The six points are located such that $14,25,36$ intersect  in a single point (as in the picture), and the six corresponding points on the line form a quadrilateral set. The green points and lines form a configuration that is a geometric certificate for the fact that $(1,4: 2,5: 3,6)$ is a quadrilateral set.

\begin{figure}[H]
\begin{center}
\includegraphics[width=0.65\textwidth]{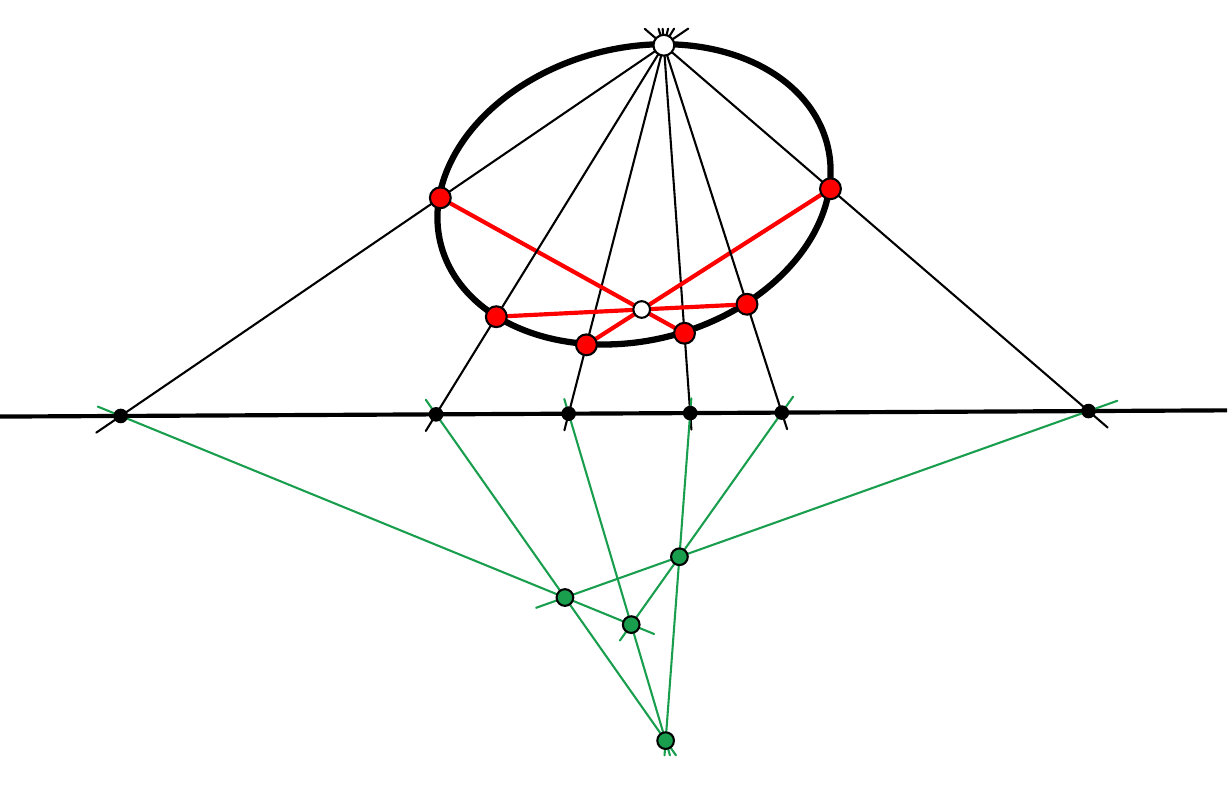}
\begin{picture}(0,0)
\put(-110,145){\footnotesize $\infty$}
\put(-70,110){\footnotesize $1$}
\put(-85,85){\footnotesize $2$}
\put(-99,77){\footnotesize $3$}
\put(-119,74){\footnotesize $4$}
\put(-139,78){\footnotesize $5$}
\put(-155,98){\footnotesize $6$}
\put(-20,75){\footnotesize $\mathbb{RP}^1$}

\end{picture}
\end{center}
\vskip-5mm
\captionof{figure}{Stereographic projection and Hesse's transfer principle.}\label{fig:quad}
\end{figure}

Now let $1,2,3,4,5,6,7$  be seven consecutive points in a proper Poncelet chain.
We want to develop a bracket expression that corresponds to the fact that the the lines $12, 23, 34, 45, 56, 67$ are simultaneously tangent to a conic. This is a projectively invariant property. 
The situation can be characterised by Brianchon's theorem stating that the long diagonals of the hexagon supported by these edges meet in a  point.

To characterize this algebraically, we need one more point that we do not yet have in the Poncelet chain: the intersection of 12 and 67.
Let $q$ be that point. Brianchon's condition now reads: the lines
 $4q; 52; 36$ meet in point.

\begin{figure}[H]
\begin{center}
\includegraphics[width=0.65\textwidth]{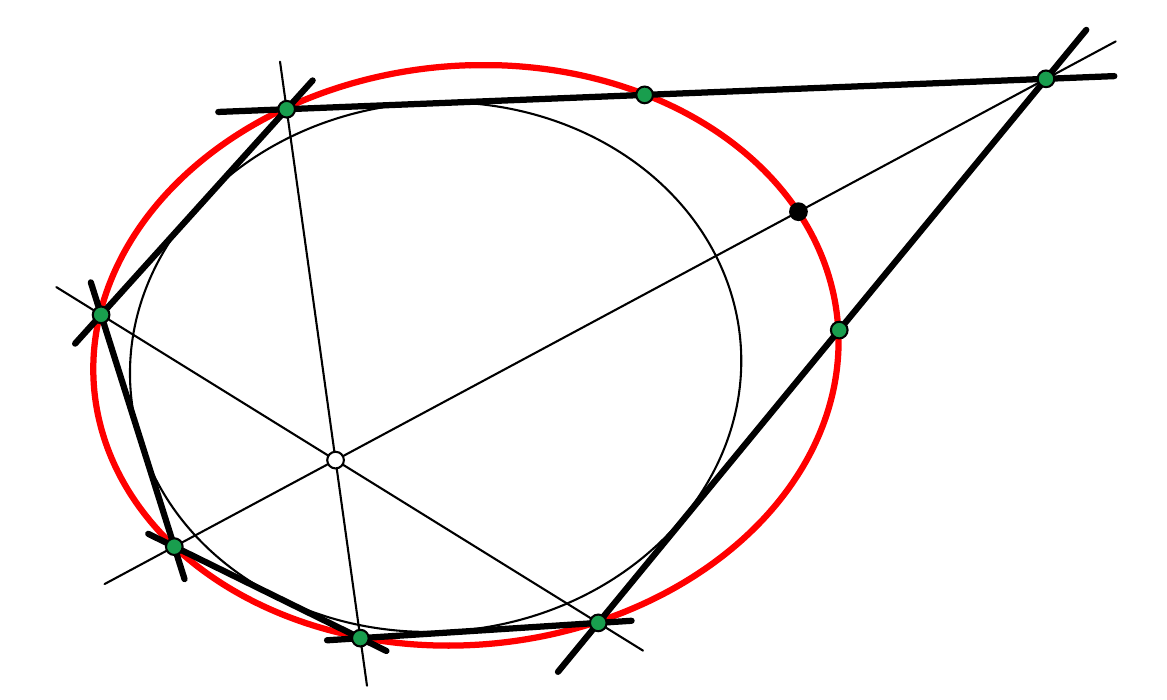}
\begin{picture}(0,0)
\put(-100,120){\footnotesize $1$}
\put(-180,120){\footnotesize $2$}
\put(-217,82){\footnotesize $3$}
\put(-200,15){\footnotesize $4$}
\put(-165,0){\footnotesize $5$}
\put(-112,3){\footnotesize $6$}
\put(-62,65){\footnotesize $7$}
\put(-73,102){\footnotesize $p$}
\put(-25,111){\footnotesize $q$}
\end{picture}
\end{center}
\captionof{figure}{A Poncelet 7-chain.}
\label{fig:Poncelet7Chain}
\end{figure}

Now consider the point $p$, the intersection (the one that is not 4) of $4q$ with the conic in Figure \ref{fig:Poncelet7Chain}.
All lines in the picture are spanned by the points $1,2,3,4,5,6,7,p$.
We want to have two concurrences between these lines.
One stating the existence of point $q$, which is the meet of $67; 21; 4p$.
And one expressing Brianchon's theorem, which is that $4p;52;63$ meet.
In terms of determinants, the conditions read (by formula (\ref{eq:quad})):

\[
\begin{array}{c}
[p2][74][16]=[p6][72][14]\phantom{.}\\[2mm]
[p6][54][32]=[p2][56][34].\\[2mm]
\end{array}
\]
After multiplying the left and right sides, one can cancel every determinant that contains $p$ and get:

\begin{equation}\label{eq:7chain}
[74][16][54][32]=[72][14][56][34].
\end{equation}

\medskip

This is a bracket condition which is automatically satisfied if $1\ldots 7$ forms a proper Poncelet chain in that order.
(One might ask if it was admissible to divide by the brackets $[p2]$ and $[p6]$, since we might have divided by zero.
However, in that case the point $p$ would coincide with $2$ or $6$, and then the chain would no longer be proper, since this also forces other points to coincide).
Observe that except  point $4$, every point occurs linearly in that expression.
Point $4$ is quadratic. 
Since the equation is linear in 7, this  can be applied in a very computational sense.
If points $1,\upto 6$ are given, this expression allows us to uniquely determine the position of the next point~$7$ in the Poncelet chain. 
Thus, we directly get a recursive algorithm to calculate Poncelet chains. That is:

\begin{theorem} \label{thm:ProperPonceletChain}
Seven distinct  consecutive  points $1,2,3,4,5,6,7$ form a proper Poncelet chain if and only if \[[74][16][54][32]=[72][14][56][34].\]
\end{theorem}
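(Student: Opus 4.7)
My plan is to carry out the derivation sketched in the paragraphs preceding the theorem, and then to obtain the converse from the linearity in the seventh point noted there.

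\textbf{Forward direction.} I would assume $1,\ldots,7$ is a proper Poncelet chain with inscribed conic $\mathcal{B}$, and introduce the auxiliary point $q := 12 \wedge 67$. The hexagon with consecutive vertices $2,3,4,5,6,q$ has all six sides tangent to $\mathcal{B}$, so by Brianchon's theorem its three main diagonals $25$, $36$, $4q$ concur. Let $p$ be the second intersection of the line $4q$ with $\mathcal{A}$ (distinct from $4$, since properness prevents $4q$ from being tangent to $\mathcal{A}$ at $4$). I then apply Hesse's transfer principle twice, each time translating a concurrence of three chords of $\mathcal{A}$ into a quadset relation of the form~(\ref{eq:quad}) on the stereographic $\mathbb{RP}^1$. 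The concurrence of $12, 67, 4p$ at $q$, applied to the six points $1,2,4,6,7,p$ on $\mathcal{A}$, yields $[p2][74][16] = [p6][72][14]$, while the Brianchon concurrence of $52, 36, 4p$, applied to the six points $2,3,4,5,6,p$, yields $[p6][54][32] = [p2][56][34]$. Multiplying the two identities cancels the factors $[p2]$ and $[p6]$ (nonzero by properness, since $p \notin \{2,6\}$) and produces exactly the asserted bracket equation.

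\textbf{Converse direction.} Conversely, suppose the bracket equation holds. Since five lines in general position determine a unique conic tangent to all of them, let $\mathcal{B}$ be the conic inscribed in the pentagon of tangent lines $12, 23, 34, 45, 56$, and let $7^\star$ denote the second intersection of the remaining tangent from $6$ to $\mathcal{B}$ with $\mathcal{A}$, so that $1,\ldots,6,7^\star$ is a proper Poncelet chain. By the forward direction $7^\star$ satisfies the same bracket equation. But viewed as a condition on the seventh point, the equation has the form $c_1 \cdot [x4] = c_2 \cdot [x2]$ with $c_1 = [16][54][32]$ and $c_2 = [14][56][34]$, both nonzero by the distinctness of $1,\ldots,6$; this is a nondegenerate linear equation in $x \in \mathbb{RP}^1$ with a unique solution. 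Hence $7 = 7^\star$, so $67$ is tangent to $\mathcal{B}$ and $1,\ldots,7$ is a proper Poncelet chain.

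\textbf{Main obstacle.} The only substantive work is the bookkeeping in the two Hesse applications: one has to identify the correct sextuple of points on $\mathcal{A}$ in each case and permute the pairing so that, after invoking~(\ref{eq:quad}), the factors involving the auxiliary point $p$ appear on opposite sides of the two resulting identities and cancel cleanly upon multiplication. Once this has been arranged, the forward direction is essentially a one-line application of Brianchon, and the converse is a routine uniqueness-by-linearity argument on $\mathbb{RP}^1$.
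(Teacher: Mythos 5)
Your argument is correct and follows essentially the same route as the paper: the auxiliary point $q=12\wedge 67$, Brianchon applied to the circumscribed hexagon $2,3,4,5,6,q$, the second intersection $p$ of $4q$ with $\mathcal{A}$, Hesse's transfer of the two concurrences into the two quadset identities $[p2][74][16]=[p6][72][14]$ and $[p6][54][32]=[p2][56][34]$, and multiplication with cancellation of the $p$-brackets. Your converse via uniqueness of the solution of the nondegenerate linear equation in the seventh point is exactly the argument the paper invokes (in Section~3.3) for sufficiency, merely spelled out more explicitly.
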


\medskip 

\noindent
A few comments are appropriate here:
One may read this equation as
\[
{[74][32]\over [72] [34]} =  {[14][56]\over [16][54]}.
\]

\noindent
On the left and right side one now has  two cross ratios. One can compactly denote this as

\[(7,3 ; 4,2) = (1,5 ; 4,6).\]

Since one can permute the letters in the cross ratios, one gets several other
additional equivalent expressions.
They translate to other bracket expressions. If we sort everything lexicographically,
we get the following three equivalent bracket characterizations for $7$-chains:
\[
\begin{array}{c}
[14][27][34][56]=[16][23][45][47],\\[2mm]
[14][24][37][56]=[15][23][46][47],\\[2mm]
[15][27][34][46]=[16][24][37][45].\\[2mm]
\end{array} 
\]
Two of them imply the third by multiplication and cancellation.
Each of them can also be expressed by equations relating two cross ratios.

\subsection{Six consecutive points in a Poncelet 7-gon}

Now we can combine these expressions to get the desired condition for six consecutive points in a proper Poncelet
heptagon. Let  $1,2,3,4,5,6,7$ be the points on a conic   in this cyclic order  forming a proper Poncelet 7-gon.
If the first six points are given, then the position of the point 7 is uniquely determined, since the points form a Poncelet chain.
Equivalently, one could think of this in the following way: Points $1\upto 6$ determine 5 segments. These segments uniquely
determine the inner conic, and this conic uniquely determines how the Poncelet chain proceeds. Hence, being a Poncelet $n$-gon
 imposes a projectively invariant condition on the position of the first $6$ points. We want to derive this condition for the Poncelet heptagon.

Depending on which point we start with, we get 7 different Poncelet chains. Each of them
creates 3 expressions like we had above. Hence, from the 7-gon we get all in all 21 (highly dependent) 
bracket expressions (each of them of the form $[ \ldots ][\ldots  ][\ldots  ][\ldots  ]=[\ldots  ][\ldots  ][\ldots  ][\ldots  ]$).

We are looking for a subset of them that can eliminate one point of the heptagon completely (say 7).
We can achieve that by the following choice:
Take the Poncelet chains $3,4,5,6,7,1,2$ and $5,6,7,1,2,3,4$ in the heptagon. From them one can deduce (by formula (\ref{eq:7chain}))

\[
\begin{array}{c}
[36][24][56][17]=[13][45][67][26],\\[2mm]
[35][67][12][14]=[15][46][17][23].\\[2mm]
\end{array} 
\]

Multiplying the left and right sides and performing cancellations, point 7 disappears completely, and one gets:

\begin{equation}\label{eq:7gon}
[36][24][56][35][12][14]=[13][45][26][15][46][23].
\end{equation}

This is the desired condition for 6 points being the first six points of a Poncelet 7-gon. Division by the brackets is admissible,
because we were starting with a proper heptagon. 
In this equation, each point is quadratic. In fact, this corresponds to the fact that after choosing the first five points,
there are exactly two possible positions of point 6 that lead to a Poncelet heptagon.
\medskip

\noindent
 Summarising, we get:

\begin{theorem}
Six points $1,2,3,4,5,6$ form six consecutive points in a proper Poncelet heptagon, if and only if \[[36][24][56][35][12][14]=[13][45][26][15][46][23].\]
\end{theorem}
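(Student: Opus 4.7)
The plan is to use Theorem \ref{thm:ProperPonceletChain} (the seven-point chain characterization) twice, applied to two cleverly chosen cyclic relabellings of the heptagon, and then to combine the resulting identities so that every bracket containing the unwanted vertex $7$ cancels. Because the seven vertices of a proper Poncelet heptagon form a closed chain, any cyclic shift of the labels is again a proper Poncelet 7-chain and hence also satisfies an instance of equation (\ref{eq:7chain}); moreover, by symmetry of the encoded cross-ratio there are three equivalent bracket presentations of that identity available for each shift. So the search becomes combinatorial: pick two shifts (and, if necessary, two of the three presentations) whose products have matching sets of $7$-brackets on the two sides.

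Guided by the fact that the only brackets involving $7$ that can possibly appear are $[17],[27],\ldots,[67]$, I would try the two shifts $3,4,5,6,7,1,2$ and $5,6,7,1,2,3,4$. A careful relabeling of (\ref{eq:7chain}) for each of these shifts produces bracket identities in which exactly $[17]$ and $[67]$ survive as the $7$-containing factors, once on each side. Multiplying the two left-hand sides and the two right-hand sides and cancelling $[17]$ and $[67]$ then yields directly the required identity
\[
[36][24][56][35][12][14]=[13][45][26][15][46][23].
\]
Division by the cancelled brackets is legitimate because properness guarantees that no two of the seven vertices coincide, so none of those brackets vanishes.

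For the converse, I would reverse the reasoning: given six points on a conic satisfying the identity, Theorem \ref{thm:ProperPonceletChain} uniquely produces a seventh point $7$ such that $1,\dots,7$ forms a proper Poncelet 7-chain, and it then remains to verify that the chain closes at step seven. Since each variable enters the bracket identity quadratically, fixing five of the six points leaves a quadratic condition on the sixth, which matches the fact that a Poncelet-heptagon extension of the first five points is also quadratically constrained; a dimension/degree argument on these two codimension-one subvarieties of the sixfold product of the conic should force equality, provided one can rule out extraneous components of the bracket variety. This last check is the main obstacle: the forward direction is essentially mechanical bookkeeping around a product of two instances of Theorem \ref{thm:ProperPonceletChain}, whereas certifying that the derived identity picks out \emph{only} the Poncelet locus requires showing that the two quadratic conditions in the sixth point agree up to a non-vanishing scalar, with no spurious parasitic solutions.
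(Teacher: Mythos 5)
Your proposal matches the paper's proof essentially verbatim: the paper applies the 7-chain identity (\ref{eq:7chain}) to exactly the two cyclic shifts $3,4,5,6,7,1,2$ and $5,6,7,1,2,3,4$, multiplies the two equations, and cancels the surviving brackets $[17]$ and $[67]$ using properness. Your handling of the converse is also the same in substance (the paper likewise relies on the observation that the condition is quadratic in point $6$, matching the two admissible positions), though you are somewhat more candid than the paper about the residual work needed to exclude extraneous solutions.
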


\bigskip
\begin{remark}{\rm 
There are 7 Poncelet chains in the heptagon. Each of them produces 3 equations.
From these 21 equations one may ask how many of them are independent. It turns out
that if one knows the right 6 of them, the rest follows (actually this is asking for linear relations in the exponents of 
the monomials). Showing that a certain equation can be concluded may result in simply looking for linear
dependencies in the exponent vectors.
It is a straightforward calculation to check that the resulting ``rank'' of the space of equations is indeed six.
}
\end{remark}
\medskip

\begin{remark}{\rm
We derived the expression
\[[36][24][56][35][12][14]=[13][45][26][15][46][23]\]
as a condition. This expression may be considered as relation between cross ratios. This can be done in multiple ways.
One can regroup the equation as
\[
{\color{red}[36]}
{\color{OliveGreen}[35]}
{\color{blue}[56]}
{\color{red}[14]}
{\color{OliveGreen}[24]}
{\color{blue}[12]}
=
{\color{red}[13]}
{\color{OliveGreen}[23]}
{\color{blue}[26]}
{\color{red}[46]}
{\color{OliveGreen}[45]}
{\color{blue}[15]},
\]
and rewrite
\[
{\color{red}[36][14]\over[13][46]} \cdot {\color{OliveGreen}[35][24]\over [23][45]} \cdot{\color{blue}[56][12]\over [26][15]} =1,
\]
which reads as requiring the product of the cross ratios to equal 1:
\[
{\color{red}(1,6 ; 4,3)} \cdot {\color{OliveGreen}(3,4 ; 5,2)} \cdot{\color{blue}(5,2 ; 6,1)}=1.
\]
}
\end{remark}

\bigskip

\subsection{How many solutions are there?}\label{sectalgebra}
Before we continue, let us have a closer look at the algebraic nature of the 7-chain equation and algebraic conditions for $n$-gons for small $n$.
We have proved that the condition
\[[14][27][34][56]=[16][23][45][47]\]
is satisfied if the 7 points form a proper Poncelet chain. If we restrict to {\it proper} Poncelet chains, the condition is both necessary and sufficient, since point 7 is uniquely determined 
by the condition of being a Poncelet chain as well as by the algebraic condition.
The situation becomes slightly more complicated if we also allow non-proper choices of chains. Although the above equation may not be used to calculate the position of point 7 in certain 
(rare) degenerate situations, this only happens in situations where $1\upto 6$ do not uniquely determine 7.

We saw that we can use the 7 chain equation to derive the algebraic degree of the condition for point 6 under which we get a Poncelet $n$-gon. 
For $n=7$ this leads to a quadratic equation that has to be solved to determine the position of point 7. What happens for larger $n$? 
Since for the rest of the article it is not of high relevance, we only briefly describe what happens in the case of a concrete example.
Let us consider again the points represented on the projective line. We want to do the calculation for a specific, reasonably generic, choice of the first five points and the case $n=8$. 
This time we go ``down" to the level of very concrete coordinates. Let us assume that the point $i$ is given by homogeneous coordinates $(x_i,1)$ in $\mathbb{RP}^1$. 
A determinant $[i,j]$ then becomes $x_i-x_j$,
and our above equation now reads 
\[(x_1-x_4)(x_2-x_7)(x_3-x_4)(x_5-x_6)=(x_1-x_6)(x_2-x_3)(x_4-x_5)(x_4-x_7).\]
Up to projective transformation we may assume $x_1=-1$, $x_2=0$, $x_3=2$. Let us calculate the concrete situation $x_4=4$, $x_5=5$, and let us determine all positions for point $x_6$ 
that lead to a Poncelet octagon. We start with the equation
\[(-1-4)(0-x_7)(1-4)(5-x_6)=(-1-x_6)(0-1)(4-5)(4-x_7).\]
Solving for $x_7$ gives $x_7=(2 + 2x_6)/(38 - 7x_6)$.
Now we can calculate $x_8$ from $x_2\upto x_7$ by applying once more the 7-chain equation. We get
\[x_8={(5 (-8 + x_6) (-4 + x_6))\over(40 + x_6 (-156 + 29 x_6))}.\]
Performing one more iteration to calculate $x_9$, we obtain
\[x_9={(4 (-5 +  x_6) (100 +  x_6 (-108 + 17  x_6)))\over(-496 + 
 a (572 +  x_6 (-251 + 31  x_6)))}.\]
 Closing the chain by setting $x_9=x_1$, we obtain the equation:
 \[-2496 + 3132 x_6 - 1023 x_6^2 + 99 x_6^3=0.\]
Amazingly, this polynomial factors to
\[( 624 - 627 x_6 + 99 x_6^2)\cdot(x_6-4).\]
So we get one solution $(x_6)_1=4$ and two solutions $(x_6)_{2,3}={209 \pm 5 \sqrt{649}\over 66}.$
Now a little care is appropriate. These are all solutions for which we get $x_9=x_1$; however, it is not a priori clear that each of these solution   indeed creates a Poncelet 8-gon. 
We must have $x_i=x_{i+8}$ for {\it every} $i$. The following table shows what happens if we calculate the first 10 points in a chain for the different solutions of $x_6$:
\[
\begin{array}{rcl}
x_6=4&\to&-1, 0, 1, 4, 5, 4,1, 0, -1, 10\\[2mm]
x_6={209 - 5 \sqrt{649}\over 66}&\to&
-1, 0, 1, 4, 5, {209 -5 \sqrt{649}\over 66}, {27 -\sqrt{649}\over 10} , 
{105 -5\sqrt{649}\over 26}, -1, 0\\[2mm]
x_6={209 + 5 \sqrt{649}\over 66}&\to&
-1, 0, 1, 4, 5, {209 + 5 \sqrt{649}\over 66}, {27 +\sqrt{649}\over 10} , 
{105 +5\sqrt{649}\over 26}, -1, 0.\\[2mm]
\end{array}
\]
While the two solutions of the quadratic equation produce a solution with $x_{10}=x_2=0$ (as it should), the other solution $x_6=4$
produces $x_{10}=10\neq x_2$, and thus is not a Poncelet chain. Hence for the Poncelet octagon there are exactly two solutions 
that can be derived, by solving a quadratic equation. Since the choice of $x_4$ and $x_5$ was reasonably generic, we can expect similar behaviour for other choices as well.

\medskip

With analogous calculations we can find the expected number of solutions (and their algebraic degree) for other $n$.
We only quote the list of the first few here:
\[
\begin{array}{c||c|c|c|c|c|c|c|c|c|c|c|c|c|c|c} 
n&6&7&8&9&10&11&12&13&14&15&16&17&18&19&20\\
\hline
\mathrm{solutions}&1&2&2&3&4&5&5&7&8&9&10&12&13&15&16\\
\end{array}
\]

In particular, every $n>8$ requires the calculation of a root of a polynomial of degree at least 3.

\section{Geometric Construction of Poncelet $7$-gons and $8$-gons} \label{sect:ConstructionPoncelet}

The  main  target of this section is to 
provide explicit elementary geometric constructions that 
can create all Poncelet  $7$-gons and all Poncelet $8$-gons.
Furthermore, we will prove that these constructions  indeed create the desired Poncelet polygon.

Before we start, let us do a general count of {\it degrees of freedom}, and by this get an impression of the dimension of the 
related configuration spaces.

In what  follows we consider the term $n$-gon in its broadest sense. 
We will allow self-intersecting situations, and represent the segments by the lines spanned by their endpoints.
A Poncelet $n$-gon is a sequence $(p_1,p_2,\ldots, p_n)$ of points which lie on a common conic $\mathcal{A}$
where the lines  $p_ip_{i+1}$ are each tangent to a second conic $\mathcal{B}$.

The property of being a Poncelet $n$-gon is a projective invariant: Every projective transformation of a Poncelet $n$-gon
is again a Poncelet $n$-gon. Thus we may choose the first 4 points $p_1,p_2,p_3,p_4$ of a Poncelet $n$-gon arbitrarily to
form a projective basis. The next point $p_5$ may be chosen arbitrarily as well as long as we require that no triple of these 5 points is collinear. This can be seen by the following 
argument. 
Five points  $p_1,\ldots, p_5$ with no triple collinear determine a unique non-degenerate conic $\mathcal{A}$.
Thus, all remaining points will also be on conic $\mathcal{A}$. If we choose a point $p_6$ on that conic, the five  lines $p_1p_2,\ldots, p_5p_6$ determine the inner conic $\mathcal{B}$. 
Whether the Poncelet chain created by these 6 points closes up after $n$ steps is an algebraic condition on the position of $p_6$. If we express the position of $p_6$ in terms of the 
1-dimensional coordinates of the $\mathbb{RP}^1$ that can be associated to $\mathcal{A}$, then the possible positions of $p_6$ are the roots of a suitably chosen polynomial.
Thus,  after modding out projective transformations we have $2$ continuous degrees of freedom (the position of $p_5$) and after that a discrete choice of positions for the remaining points. 
We want to emphasise that this situation is independent from $n$ as long as $n\geq 5$. We may also think of this as starting with an arbitrary non-degenerate conic and then choosing 5 different points on it, which uniquely determine finitely many possible Poncelet $n$-gons for every $n>5$.

\subsection{Poncelet triangles to hexagons}
We briefly remark that it is fairly  simple to construct Poncelet $n$-gons for $n=3,4,5,6$. 
For $n=3,4,5$ any choice of $n$ distinct points corresponds to a Poncelet polygon. The inner conic can easily be chosen simultaneously tangent to all $n$ sides. 
For $n=6$ it requires one application of Brianchon's Theorem to determine an admissible position of point $p_6$ after the first five points are given. 
The situation becomes more complicated for $n\geq 7$.

\subsection{Poncelet heptagons}

Now we consider the particular case $n=7$. In the last section we concluded that (for the ${\mathbb{RP}^1}$ coordinates of the points on a conic $\mathcal{A}$)
we have (\ref{eq:7gon}):
\[
[36][24][56][35][12][14]=[13][45][26][15][46][23].
\]
(Again by abuse of notation we identify a point $p_i$ with its index). This is the condition that $p_6$ in the conic has to satisfy after
 $p_1\upto p_5$ are given.

 \medskip

\noindent
 The following construction yields a suitable point $p_6$ (see Figure~\ref{fig:7gon1}).

\begin{figure}[H]
\begin{center}
\includegraphics[width=0.65\textwidth]{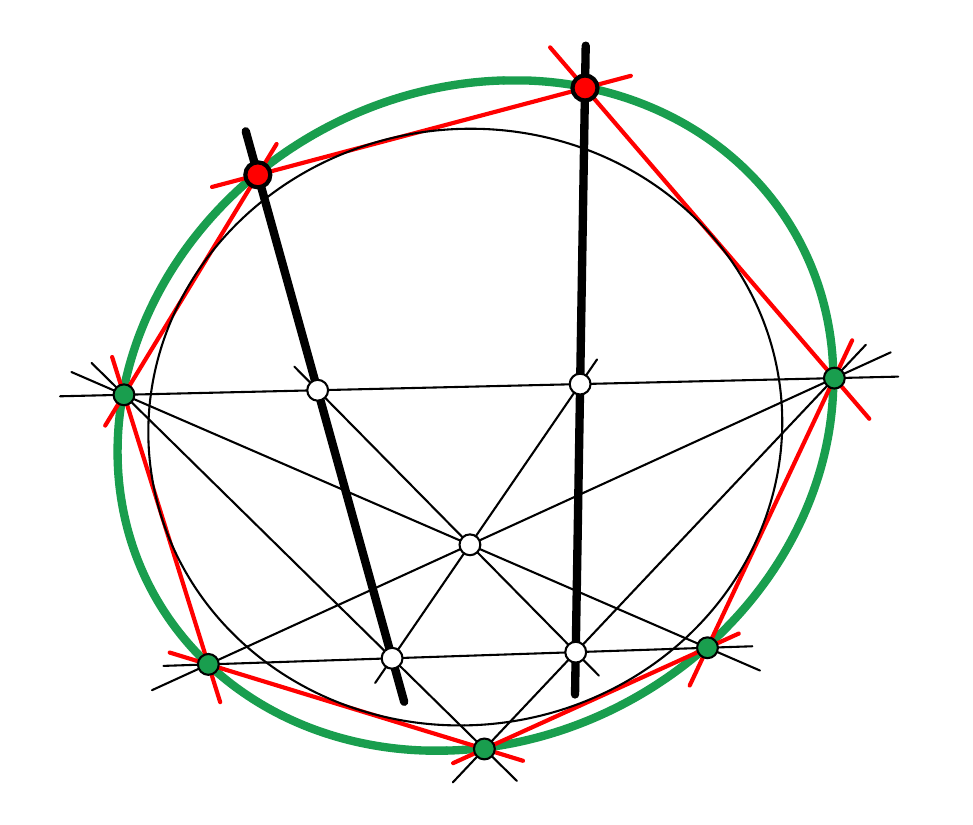}
\begin{picture}(0,0)
\put(-210,90){\footnotesize $1$}
\put(-185,25){\footnotesize $2$}
\put(-120,5){\footnotesize $3$}
\put(-65,30){\footnotesize $4$}
\put(-32,85){\footnotesize $5$}
\put(-89,175){\footnotesize $6$}
\put(-177,153){\footnotesize $7$}
\put(-123,72){\footnotesize $O$}
\put(-138,45){\footnotesize $P$}
\put(-109,42){\footnotesize $Q$}
\put(-103,104){\footnotesize $R$}
\put(-90,124){\footnotesize $l$}
\end{picture}
\end{center}
\captionof{figure}{Construction of a Poncelet 7-gon.}\label{fig:7gon1}
\end{figure}

\begin{construction}\label{const1}
{\rm
The start of the construction is five free points $1,\ldots, 5$ (no three collinear).
Then we construct
\begin{itemize}
\item[$O$:] intersection of $14$ and $25$,
\item[$P$:] intersection of $13$ and $24$,
\item[$Q$:] intersection of $24$ and $35$,
\item[$R$:] intersection of $15$ and $OP$,
\item[$l$:] join of $R$ and $Q$.
\end{itemize}

Finally, we intersect the line $l$ with the conic $\mathcal{A}$ that passed through the initial 5 points. We get two intersections and call one of them $6$. 
The chain $1,2,3,4,5,6$ will be the initial sequence of a Poncelet 7-gon.
}
\end{construction}

\noindent
Before we continue to prove this result a few subtleties should be mentioned.

\begin{itemize}
\item The choice of the point 6 is not unique. The line $l$ has {\it two} intersections with $\mathcal{A}$. Each of the 
choices  leads to a  Poncelet heptagon. This reflects the fact that the underlying algebraic condition is quadratic.
The resulting situation for the other choice is shown in Figure \ref{fig:7gon2}.

\item We did not explicitly construct point 7 in the above procedure. There are three ways to construct this point:
\begin{itemize}
\item [a)] algebraically:  The position of point 7 can be determined by the Poncelet chain condition from Theorem~\ref{thm:ProperPonceletChain};
\item [b)] geometrically:  There are some fairly simple constructions to obtain point 7 only by join and meet constructions from the previously constructed points $1\upto 6$, using variants of Pascal's or Brianchon's theorem; 
\item [c)] by symmetry:  One could also perform a symmetric version of the construction of point 6. However, one has to be aware that the choice of the  intersection 
of point 7 must be consistent with the choice of point 6.
\end{itemize}
\end{itemize}

\begin{figure}[H]
\begin{center}
\includegraphics[width=0.75\textwidth]{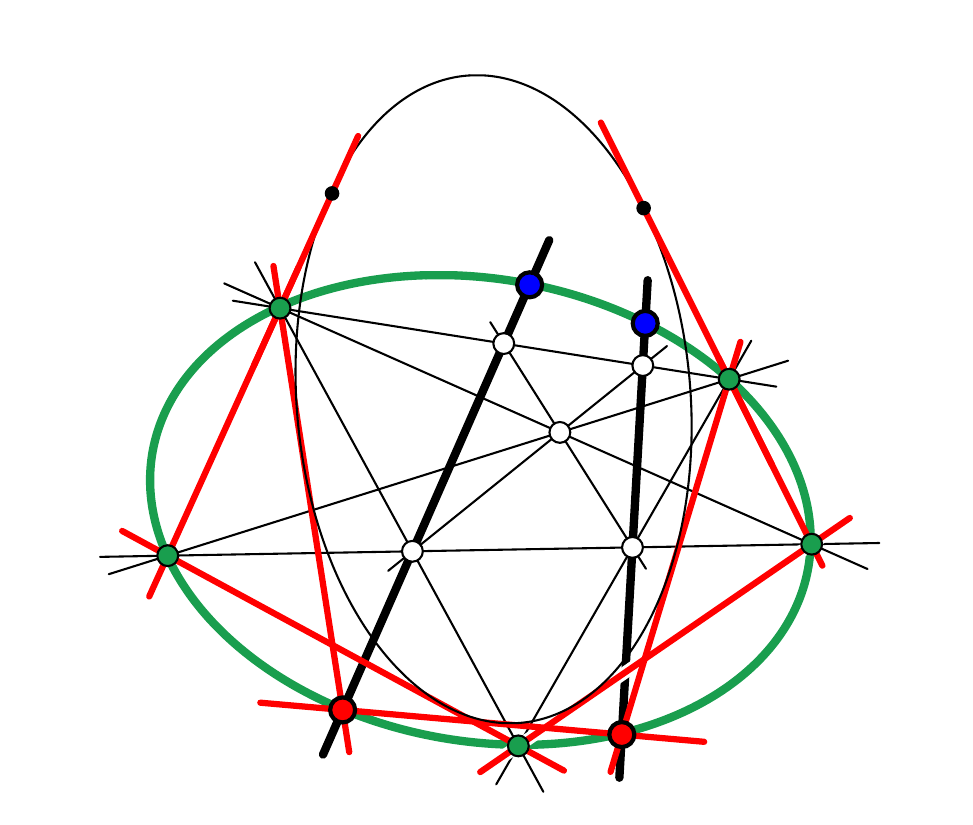}
\begin{picture}(0,0)
\put(-200,146){\footnotesize $1$}
\put(-220,58){\footnotesize $2$}
\put(-127,9){\footnotesize $3$}
\put(-45,83){\footnotesize $4$}
\put(-71,127){\footnotesize $5$}
\put(-92,12){\footnotesize $6$}
\put(-180,23){\footnotesize $7$}
\end{picture}
\end{center}
\captionof{figure}{The other possible choice. Here a self-overlapping Poncelet polygon is generated. Some of the touching points to the second conic lie in the periphery.} \label{fig:7gon2}

\end{figure}

\noindent{\it Remark:}
After the points $1,\ldots, 7$ are constructed by Construction \ref{const1}
as Figure \ref{fig:7gon1} indicates, it is easy to create a Grünbaum--Rigby configuration $(21_4)$ from them as shown in Figure \ref{fig:Mov2}.
We count indices modulo 7 and define an operation on seven points 
$P=(p_1, \ldots p_7)$ by 
\[\vee_a(P):=(p_1\vee p_{1+a},\ldots,p_7\vee p_{7+a})\]
Dually,  for a configurations of lines $L=(l_1, \ldots l_7)$
we define  
\[\wedge_b(L):=(l_1\vee l_{1-b},\ldots,l_7\vee l_{7-b}).\]
Starting from the six points $P=(1,2,3,4,5,6,7)$ of our construction we 
construct all intermediate construction elements of
\[
P':=\wedge_3\vee_1\wedge_2\vee_3\wedge_1\vee_2(P).
\]
In \cite{BGRGT24a} we showed that if $P$ is a Poncelet polygon then $P=P'$ and creates the Grünbaum--Rigby configuration.

\begin{theorem}\label{constr7gon}
If none of the construction steps in Construction 1  becomes degenerate,
then it produces an admissible 6th point of a Poncelet heptagon.
\end{theorem}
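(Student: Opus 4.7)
The plan is to reduce the assertion to an algebraic identity and verify it directly, using the Poncelet heptagon condition (\ref{eq:7gon}) established in Section 3.

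First, parametrize the conic $\mathcal{A}$ by the rational normal parametrization $p(t) = (t^2, t, 1)^T$, so that each $p_i$ is identified with its parameter $t_i$. Under this parametrization the chord $\ell_{ij}$ through $p_i$ and $p_j$ has homogeneous coordinates proportional to $(1, -(t_i + t_j), t_i t_j)^T$, and the $\mathbb{RP}^1$ bracket used in Section 3 is simply $[ij] = t_i - t_j$.

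Next, compute the auxiliary points of Construction~\ref{const1} as cross products of line coordinates: $O = \ell_{14} \times \ell_{25}$, $P = \ell_{13} \times \ell_{24}$, $Q = \ell_{24} \times \ell_{35}$, and $R = (O \times P) \times \ell_{15}$. Each is an explicit polynomial vector in $t_1, \ldots, t_5$. Form the line $\ell = Q \times R$ and intersect it with the conic by solving $\ell^T p(t_6) = 0$, which yields a quadratic equation
\[
A(t_1,\ldots,t_5)\,t_6^2 + B(t_1,\ldots,t_5)\,t_6 + C(t_1,\ldots,t_5) = 0,
\]
whose two roots are the candidates for $t_6$ produced by the construction.

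On the other hand, writing $[i6] = t_i - t_6$ and collecting (\ref{eq:7gon}) as a polynomial in $t_6$ yields a quadratic $A' t_6^2 + B' t_6 + C' = 0$ whose roots are exactly the admissible sixth vertices of a Poncelet heptagon (this is the quadratic nature remarked on after (\ref{eq:7gon})). The theorem then reduces to the identity $(A : B : C) = (A' : B' : C')$ in projective triples, which is a polynomial identity in $t_1, \ldots, t_5$. The non-degeneracy hypothesis ensures that $\ell = QR$ is a proper secant of $\mathcal{A}$, so $A \neq 0$; hence the comparison is well-posed and, once the proportionality is established, both roots of the construction's quadratic are admissible sixth points.

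The main obstacle is the algebraic bulk of the intermediate expressions: the coordinates of $R$ already have degree four in the $t_i$, and $\ell = Q \times R$ pushes this further, so the explicit verification of $(A:B:C) = (A':B':C')$ is best carried out by computer algebra or by organizing each side according to multi-degree and elementary symmetric functions of $t_1, \ldots, t_5$. A more conceptual route would split the construction into several Hesse-transfer/Pascal-type steps, recasting each intersection as a quadset relation on $\mathbb{RP}^1$ and chaining them to obtain (\ref{eq:7gon}); however, the asymmetry of the five-step construction (with $P$ and $Q$ both lying on chord $24$, while $O$ and $R$ do not) makes the choice of intermediate quadset relations non-obvious, so the direct parametric computation is the most transparent path.
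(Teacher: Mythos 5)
Your reduction is sound in principle: parametrizing $\mathcal{A}$ by $p(t)=(t^2,t,1)^T$, computing $O,P,Q,R$ and $\ell=Q\times R$ explicitly, and comparing the quadratic $\ell^{T}p(t_6)=0$ with the quadratic in $t_6$ extracted from (\ref{eq:7gon}) would prove the theorem, and the proportionality $(A:B:C)=(A':B':C')$ you assert is in fact true. This is a genuinely different, more computational route than the paper's. The paper never leaves the bracket calculus: it encodes each of the four concurrences $O,P,Q,R$ as a quadset relation among points of the $\mathbb{RP}^1$ attached to $\mathcal{A}$ (introducing the auxiliary conic points $8,9$ on line $OP$ and the second intersection $\overline{6}$ of $l$ with $\mathcal{A}$), eliminates $8$ and $9$ by a multiply-and-cancel argument, eliminates $\overline{6}$ by solving the quadset relation at $Q$ linearly as $\overline{6}=\lambda\cdot 5+\mu\cdot 2$, and finally identifies the resulting three-term polynomial with the test polynomial (\ref{eq:7gon}) via an explicit monomial combination of Grassmann--Pl\"ucker relations. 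The ``conceptual route'' you mention and set aside as non-obvious is essentially what the paper does; its payoff is that every intermediate expression stays a short multihomogeneous bracket polynomial, so the final identity is human-checkable, whereas your route trades that for straightforwardness at the cost of large raw polynomials in the $t_i$.

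The gap is that the decisive step --- actually verifying $(A:B:C)=(A':B':C')$ --- is not carried out; it is only asserted to be doable by computer algebra or by an unspecified symmetric-function bookkeeping. Since that identity is the entire mathematical content of the theorem (everything preceding it is routine setup), the proposal as written is a proof plan rather than a proof. Two remedies: perform and record the expansion (the paper observes each bracket monomial expands into only $2^6$ terms, so this is feasible by hand), or adopt the paper's device of exhibiting the difference of the two polynomials as an explicit combination of Grassmann--Pl\"ucker syzygies $[ab][cd]-[ac][bd]+[ad][bc]=0$, which yields a verifiable certificate. A smaller point: $A\neq 0$ is not equivalent to $\ell$ being a proper secant (it only says $\ell$ avoids the point $t=\infty$ of the parametrization); what the proportionality argument really needs is that neither quadratic is identically zero, which should be argued from the non-degeneracy hypothesis and from (\ref{eq:7gon}) being a genuinely quadratic condition on point $6$.
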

\begin{proof}
In what follows we again take the perspective that we identify the conic $\mathcal{A}$ with a projective line 
$\mathbb{RP}^1$ via stereographic projection.
We represent points on the conic by 2-dimensional vectors. Each relevant line will be represented by
its two intersections with the conic. Three lines meeting in a point  can algebraically be translated into a quadset condition
over the $2\times 2$ determinants of the points.
We refer to Figure~\ref{fig:7gon3}  for the labelling. We labelled all relevant points and lines. 
Those points that lie on the conic $\mathcal{A}$ are labelled by the numbers $1,\ldots,9$.
Point $\overline{6}$ denotes the other possible intersection of line $l$ and the conic. 
There are two reasons for considering point $\overline{6}$.
Firstly, it is needed as an auxiliary point to express the concurrence of two other lines with line $6\overline{6}$ in terms of quadset relations. This is what is needed for our process of translating everything into projective condition over $2\times 2$ determinants.
Secondly since there is algebraically no way to distinguish $6$ from
$\overline{6}$ this point leads to the second possible solution for how we can complete $1,\ldots, 5$ to a Poncelet heptagon.

\begin{figure}[H]
\begin{center}
\includegraphics[width=0.60\textwidth]{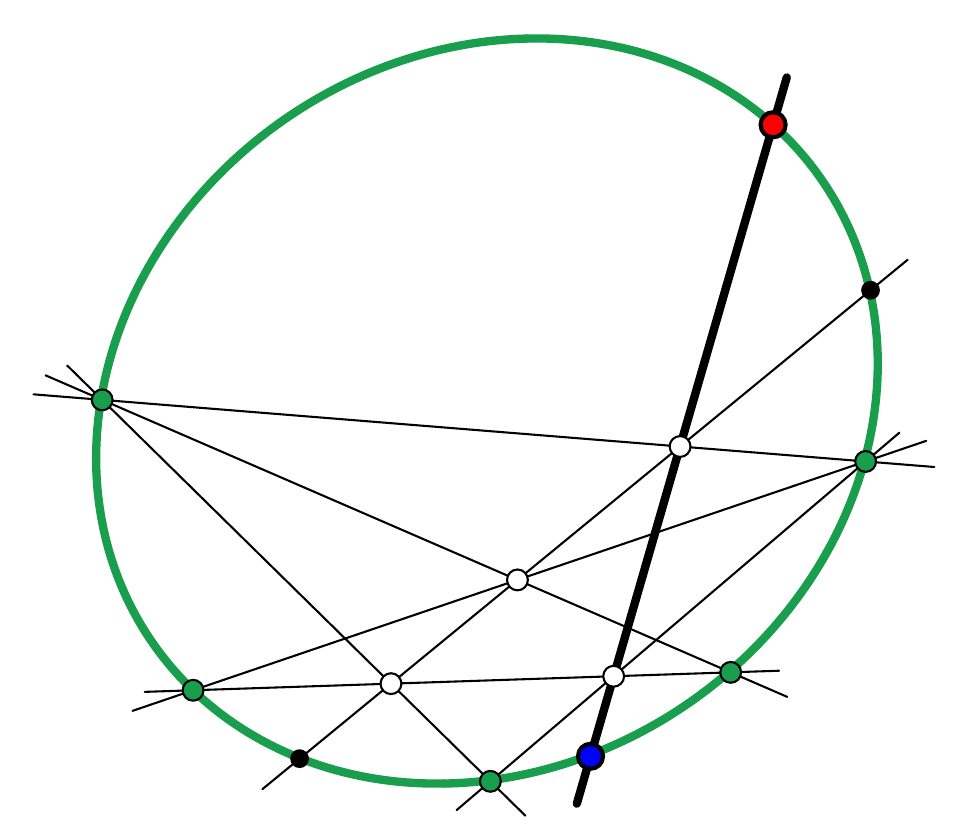}
\begin{picture}(0,0)
\put(-200,80){\footnotesize $1$}
\put(-175,20){\footnotesize $2$}
\put(-110,0){\footnotesize $3$}
\put(-58,21){\footnotesize $4$}
\put(-24,65){\footnotesize $5$}
\put(-52,158){\footnotesize $6$}
\put(-80,7){\footnotesize $\overline{6}$}
\put(-148,4){\footnotesize $8$}
\put(-25,120){\footnotesize $9$}
\put(-105,58){\footnotesize $O$}
\put(-128,35){\footnotesize $P$}
\put(-90,35){\footnotesize $Q$}
\put(-75,86){\footnotesize $R$}
\put(-60,124){\footnotesize $l$}
\end{picture}
\end{center}
\captionof{figure}{Points involved in the proof of Theorem \ref{constr7gon}}.\label{fig:7gon3}
\end{figure}

 Points 8 and 9 are the two intersections of line $OP$ with the conic. We will express all concurrences of three lines that happen in the interior of the conic 
by relations of points on the $\mathbb{RP}^1$-equivalent of the conic specifically:

\begin{itemize}
\item[$O$:] $(14\colon 25\colon 89)$ are a quadset,\\[-6mm]
\item[$P$:] $(13\colon 24\colon 89)$ are a quadset,\\[-6mm]
\item[$Q$:] $(24\colon 35\colon 6\overline{6})$ are a quadset,\\[-6mm]
\item[$R$:] $(15\colon 6\overline{6}\colon 89)$ are a quadset.
\end{itemize}
We want to show that these quadset relations together imply the
heptagon conditon:

\[[36][24][56][35][12][14]=[13][45][26][15][46][23].\]

\medskip
\noindent {\bf Step 1: eliminate 8 and 9}

\smallskip\noindent
Each of these quadset relations can be transferred in different ways into a bracket polynomial equation. We first 
concentrate on the condition coming from points $O,P,R$, {which}  are  the only conditions containing point $8$ and $9$.
We can eliminate those two points by multiplying the left and right sides of the following bracket equations,
and performing cancellation of those terms that occur twice.

\[
\begin{array}{lcrl}
[19]\cdot {\color{red}[54]} \cdot[82]&=& {\color{red}[12]}\cdot[59]\cdot[84]&\qquad (O)\\[1mm]
[18]\cdot  {\color{red}[54]}\cdot[92]&=& {\color{red}[12]}\cdot[58]\cdot[94]&\qquad (O)\\[3mm]
 {\color{red}[21]}\cdot[39]\cdot[84]&=&[29]\cdot {\color{red}[34]}\cdot[81]&\qquad (P)\\[1mm]
 {\color{red}[23]}\cdot[18]\cdot[94]&=&[28]\cdot{\color{red}[14]}\cdot[93]&\qquad (P)\\[3mm]
{\color{red}[1\overline{6}]}\cdot[95]\cdot[68]&=&[18]\cdot[9\overline{6}]\cdot{\color{red}[65]}&\qquad (R) \\[1mm]
{\color{red}[16]}\cdot[85]\cdot[\overline{6}9]&=&[19]\cdot[86]\cdot{\color{red}[\overline{6}5]}&\qquad (R)\\[1mm] 
\end{array}
\]

After multiplying all left and all right sides the only brackets that remain are the red ones,
and we are left with two equations

\[
\begin{array}{rcll}
[54]^2 \cdot  [21]\cdot[23]\cdot[1\overline{6}]\cdot[16]&=&[12]^2\cdot[34]\cdot[14]\cdot[65]\cdot[\overline{6}5]&\qquad (O,P,R)\\[1mm]
[23]\cdot[5\overline{6}]\cdot[64]&=&[2\overline{6}]\cdot[54]\cdot[63]&\qquad (Q)\\[1mm]
\end{array}
\]

\medskip
None of these equations contains 8 or 9 any more. The first equation is our combined knowledge about the points $O,P,R$.
The second equation expresses the  concurrence at point $Q$ using a carefully-chosen bracket condition.

\medskip
\noindent {\bf Step 2: eliminate $\overline{\mathbf{6}}$}

\smallskip\noindent
Next, we are going to eliminate point $\overline{6}$ so that in the end we have a condition that expresses the position of point 6 directly in terms of $1,\ldots,5$. 
Unfortunately, there seems to be no way to eliminate this point similar to the one that we used for 8 and 9. 
So, this time we have to use other measures that (unfortunately) increase the number of summands.

Since all relevant points are in $\mathbb{RP}^1$, we may express $\bar{6}$ as the linear combination of two other points. We choose $p_{5}$ and $p_{2}$, and write:
\[\overline{6}=\lambda\cdot 5 + \mu\cdot 2.\]
We insert this into 
$[23][5\overline{6}][64]=[2\overline{6}][54][63]$, and get
\[
[23][5(\lambda\cdot 5 + \mu\cdot 2)][64]=[2(\lambda\cdot 5 + \mu\cdot 2)][54][63].
\]
The multilinearity of the determinant and the fact that $[55]=[22]=0$ leaves us with
\[
\mu\cdot[23][52][64]=\lambda\cdot[25][54][63].
\]
Up to a common multiple (which is not a problem since we are working with homogeneous coordinates) this leaves us with the unique solution
\[
\lambda=[23][52][64];\quad \mu=[25][54][63].
\]
So, if we set:
\[\overline{6}=[23][52][64]\cdot 5 + [25][54][63]\cdot 2,\]
we will automatically satisfy the quadset relation around $Q$.
We now reformulate the longer expression $(O,P,R)$ with this special choice for $\bar{6}$. We get:
\[
\begin{array}{rcl}
&&[54]^2[21][23][1{\color{red}([23][52][64]\cdot 5 + [25][54][63]\cdot 2)}][16]\\[1mm]
&-&[12]^2[34][14][65][{\color{red}([23][52][64]\cdot 5 + [25][54][63]\cdot 2)}5]=0.
\end{array}
\]
The red parts are the ones that replaced point $\overline{6}$.
Expanding by multilinearity leaves us with:
\[
\begin{array}{rcl}
&&[54]^2[21][23][23][52][64][15][16]\\[1mm]
&+&[54]^2[21][23][25][54][63][12][16]\\[1mm]
&-&[12]^2[34][14][65][25][54][63][25]=0.
\end{array}
\]

\noindent
We can  eliminate a factor $[12][25][45]$, and get
\begin{equation}\label{eq:precond}
-[15][16][23]^2[45][46]-[12][16][45]^2[23][36]+[12][14][25][34][56][36]=0.
\end{equation}

This expression  no longer contains point $\overline{6}$. 
Thus this condition expresses the relation between the six points $(1,\ldots,6)$ in our construction.
If point 6 is created by Construction~\ref{const1}, it will satisfy this equation.
Observe that this equation is quadratic in 6 (as it should be), leading to two possible solutions.

\medskip
\noindent {\bf Step 3: equivalence to Poncelet 7-gon test}

\smallskip\noindent
After these reductions it remains to show that every solution of equation (\ref{eq:precond})
with given $1,\ldots,5$
leads to a valid sixth point for a Poncelet heptagon, i.e.\ to a solution of the test condition (\ref{eq:7gon}):
\[[36][24][56][35][12][14]-[13][45][26][15][46][23]=0.\]

In a certain sense this is almost a trivial fact since it turns out that the three summand polynomial that we derived in Step 2  is just literally our
testing condition in disguise. Due to general dependencies among determinant polynomials (sometimes called syzygies), the same 
polynomial may be expressed in different ways. It turns out that exactly this happens here.

One could easily finish the discussion here by a statement like ``After expanding both polynomials and cancelling vanishing terms,
both polynomials turn out to be identically the same". This could be easily done in a split second by a computer algebra program. 
It could be done even by hand, since each monomial only expands into $2^6=64$ terms. However, we want to be more specific here, 
and explicitly ``see'' the equivalence.

\medskip

The main reason for dependencies among determinants are the so-called {\it Grassmann--Plücker relations}.
For $\mathbb{RP}^1$, they are relations of the form
\begin{equation}\label{eq:gpr}
[ab][cd]-[ac][bd]+[ad][bc]=0.
\end{equation}
These relations hold for arbitrary points $a,b,c,d$, and it can be shown (Second Fundamental Theorem of projective invariant theory)
that every more-complicated relation comes from monomial linear combinations of Grassmann--Plücker relations.
Let us consider the following  linear combination of Grassmann--Plücker relations:
\[
\begin{array}{cccl}
0&=&&[15][45][46][23]\cdot([12][36]-[13][26]+[16][23])\\[1mm]
&&+&[12][23][36][45]\cdot([14][56]-[15][46]+[16][45])\\[1mm]
&&-&[12][36][14][56]\cdot([23][45]-[24][35]+[25][34]).\\[1mm]
\end{array}
\]
Expanding yields
\[
\begin{array}{c@{\ }c@{\ }c@{}l}
0&=&&{\color{red}[15][23][45][46][12][36]}
-[15][23][45][46][13][26]
+[15][23][45][46][23][16]\\[1mm]
&&+&{\color{blue}[12][23][36][45][14][56]}
-{\color{red}[12][23][36][45][15][46]}
+[12][23][36][45][16][45]\\[1mm]
&&-&{\color{blue}[12][36][14][56][23][45]}
+[12][36][14][56][24][35]
-[12][36][14][56][25][34].\\[1mm]
\end{array}
\]

\noindent
Terms with the same color cancel, and we are left with
\[
\begin{array}{c@{\ }c@{\ }l@{}l}
0&=&+
[15][23][45][46][23][16]
+[12][23][36][45][16][45]
-[12][36][14][56][25][34]
\\[1mm]
&&
-[15][23][45][46][13][26]
+[12][36][14][56][24][35]
.\\[1mm]
\end{array}
\]
The first row in this expression is exactly our construction polynomial
(\ref{eq:precond});
the second row is exactly the negative of our testing polynomial (\ref{eq:7gon}). 
Since the above expression is zero, the construction polynomial and the testing polynomial are equal. This finishes the proof of the theorem.
\end{proof}

Using the techniques developed in \cite{BGRGT24a}, Figure~\ref{fig:21_7H}
shows a realisation of the Grünbaum-Rigby $(21_4)$-configuration \cite{GrRi90} that is based
on an unusual distribution of points for the initial Poncelet polygon. The (red) points are chosen in such a way
 that the corresponding two conics that support the Poncelet polygon are intersecting in two real points. It is based on the choice of points shown in Figure~\ref{fig:7gon2}.

\begin{figure}[h]
\begin{center}
\includegraphics[width=0.80\textwidth]{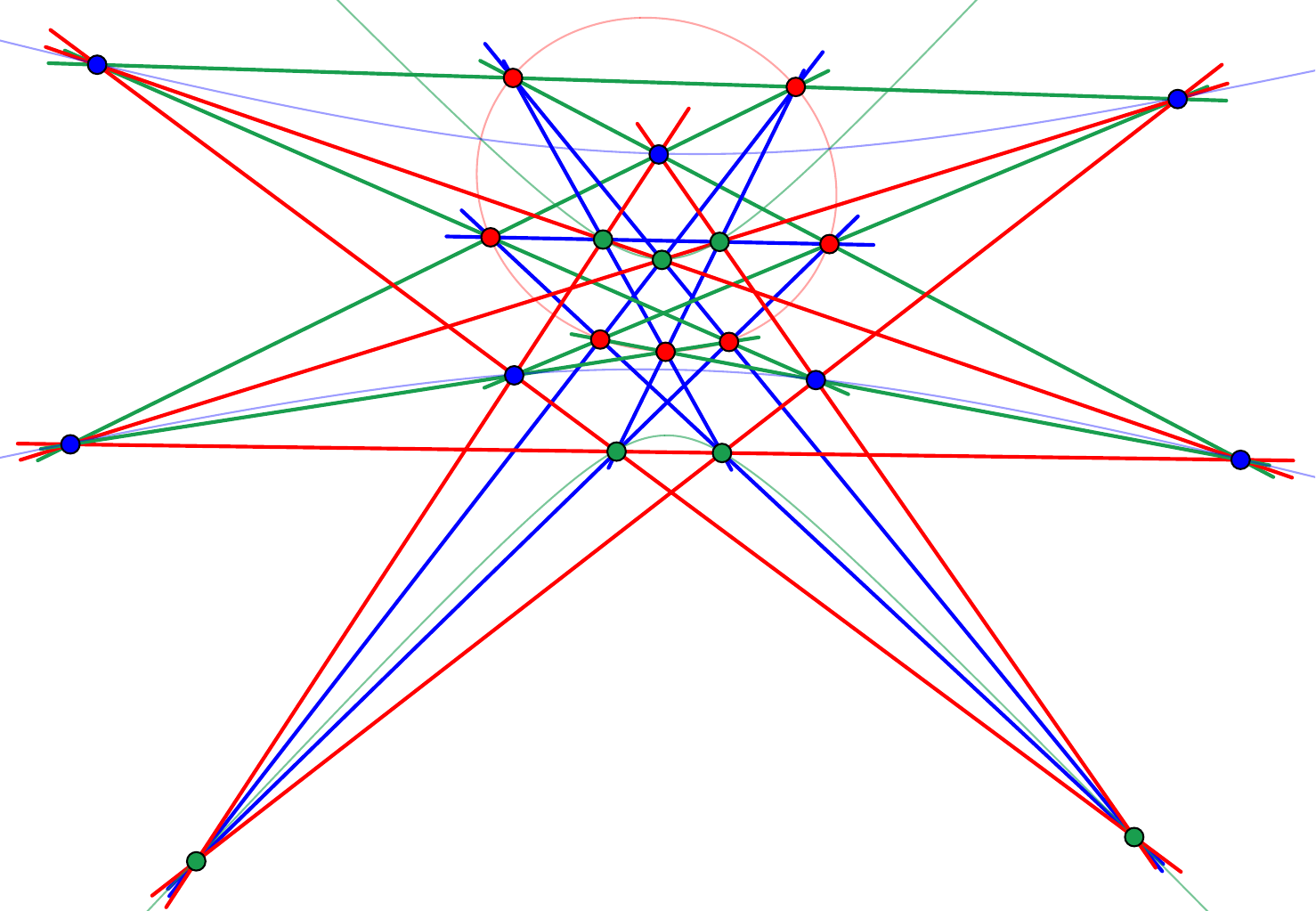}
\begin{picture}(0,0)
\end{picture}
\end{center}
\captionof{figure}{An unusal realization of $(21_4$). Supporting conics of the three rings of points are indicated faintly.}\label{fig:21_7H}
\end{figure}

\subsection{Poncelet octagons}

Let us now briefly take a look at the Poncelet octagon.
We will provide an explicit bracket condition, and give a geometric construction.
In a certain sense, both a bracket characterisation and a geometric construction are at the same time more difficult and easier than for the Poncelet 7-gon.
If we attempt to start with points $1,\ldots 5$, and then characterise and construct the possible positions of point $6$, then the construction becomes more involved than in the heptagon case,
and the characterisation in terms of bracket polynomials becomes more complicated.
There seems to be no way to characterise the geometric situation by a bracket polynomial that only has two summands (like it was possible to do in the 7-gon case).

\medskip
Fortunately, the Poncelet 8-gon possesses a richer structure than the 7-gon, and this allows us to derive quite condensed bracket characterisations and more straightforward geometric constructions.
First, consider the picture below.

\begin{figure}[H]
\begin{center}
\includegraphics[width=0.65\textwidth]{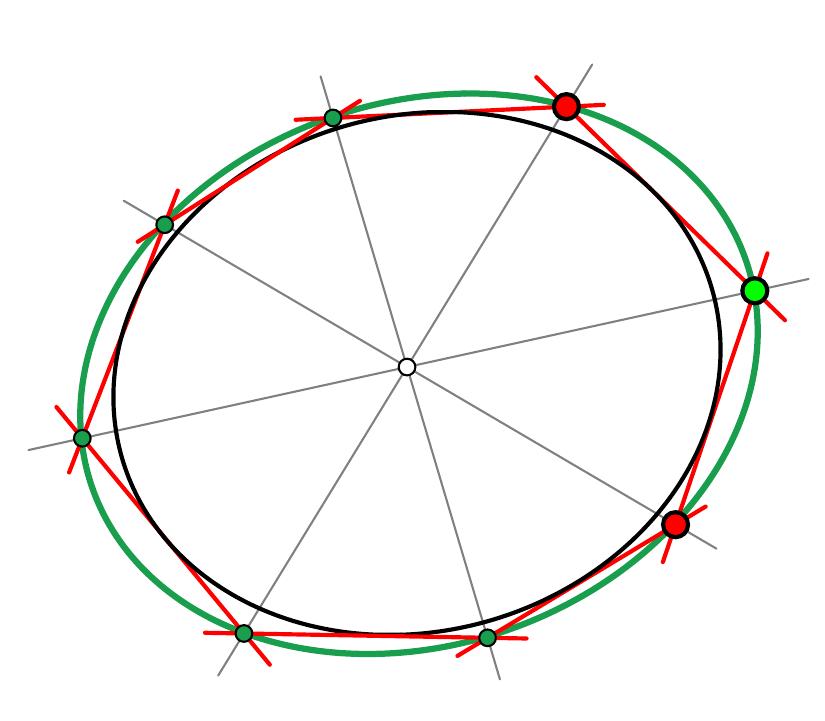}
\begin{picture}(0,0)
\put(-148,169){\footnotesize $1$}
\put(-195,133){\footnotesize $2$}
\put(-215,67){\footnotesize $3$}
\put(-173,15){\footnotesize $4$}
\put(-100,13){\footnotesize $5$}
\put(-45,40){\footnotesize $6$}
\put(-15,124){\footnotesize $7$}
\put(-65,172){\footnotesize $8$}
\put(-122,83){\footnotesize $O$}
\end{picture}
\end{center}
\captionof{figure}{A Poncelet octagon and a construction strategy.}
\end{figure}

It turns out that Poncelet $n$-gons with even $n$ posses a {\it center\/}: If one connects opposite points, the corresponding lines will all pass through one point $O$, which we call the center
of the polygon. This fact was already known to Darboux~\cite{Dar17}. Furthermore, the Poncelet octagon has a combinatorial symmetry where point 7 sits in ``the middle'' of points $1,\ldots, 5$. 
So, assume that we have constructed the position of the point 7 after the positions of $1,\ldots, 5$ were given. (We discuss how to do this in Figure \ref{fig:point7for8} and Theorem \ref{Point7BracketEq}.)
Then it is easy to construct points 6 and 8. 
Simply construct the center by intersecting $15$ and $37$. Then connect points 2 and 4 to the center, and intersect the corresponding lines with the opposite side of the conic to create 6 and 8.

In what follows we will employ that strategy, and strive for a construction and characterisation of point 7 from points $1,\ldots, 5$.

\begin{theorem} \label{Point7BracketEq}
Given a Poncelet octagon that has been transferred to $\mathbb{RP}^1$, the position of point 7 is characterised by the following bracket equation:
\[
[12][14][27][34][35][57] =[13][17][23][25][45][47].               
\]
\end{theorem}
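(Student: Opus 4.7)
The plan is to combine two classical ingredients and eliminate the "missing" points $6$ and $8$ in a single algebraic step. The first ingredient is the Poncelet 7-chain condition of Theorem~\ref{thm:ProperPonceletChain}: since $1,2,\ldots,7$ are seven consecutive vertices of the Poncelet 8-gon, they form a Poncelet 7-chain, so
\[
[14][27][34][56]=[16][23][45][47], \qquad\text{i.e.,}\qquad \frac{[56]}{[16]}=\frac{[23][45][47]}{[14][27][34]}.
\]
This is already free of point $8$; the only $6$-brackets appearing are coupled into the single ratio $[56]/[16]$.

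The second ingredient is Darboux's centrality theorem just mentioned above: the four long diagonals $15$, $26$, $37$, $48$ of the octagon meet in a common center $O$. Projection from $O$ onto the conic $\mathcal{A}$ induces a projective involution $\sigma$ of $\mathcal{A}$, and transferring to $\mathbb{RP}^1$ via stereographic projection gives an involution with $\sigma(1)=5$, $\sigma(2)=6$, $\sigma(3)=7$, $\sigma(4)=8$. Because $\sigma$ is a projectivity, it preserves cross-ratios. Applying this to the quadruple $(1,5;6,7)$ — which $\sigma$ sends to $(5,1;2,3)$ — I would obtain $(1,5;6,7)=(5,1;2,3)$, which after clearing brackets reads
\[
[12][16][35][57]=[13][17][25][56], \qquad\text{i.e.,}\qquad \frac{[56]}{[16]}=\frac{[12][35][57]}{[13][17][25]}.
\]
Equating the two expressions for $[56]/[16]$ and cross-multiplying makes all $6$-brackets cancel, leaving
\[
[12][14][27][34][35][57]=[13][17][23][25][45][47],
\]
which is the claimed identity.

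The delicate part of the argument is the choice of the right bracket form for the centrality condition. Darboux's theorem is a single geometric condition but admits many algebraic incarnations (a quadset relation for each triple of concurrent diagonals, and various permutations of each), and most of these do not cancel the $6$-brackets cleanly when fed into the 7-chain equation. The cross-ratio identity used above is precisely the one that isolates $[56]/[16]$ in terms of brackets involving only $1,2,3,5,7$; recognising that this is the "right" identity to pair with the 7-chain equation is the main conceptual step, the rest being routine cross-multiplication. For the converse characterisation, the target equation is bi-homogeneous of degree $2$ in point $7$, so for fixed $1,\ldots,5$ it has at most two solutions; by Section~\ref{sectalgebra}, a generic choice of $1,\ldots,5$ extends to exactly two Poncelet octagons, each supplying a point $7$ that satisfies the equation by the necessity direction — so the two Poncelet positions of point $7$ exhaust the roots and the equation indeed characterises point $7$.
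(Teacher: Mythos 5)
Your proof is correct, but it follows a genuinely different route from the paper's. The paper proves the identity entirely mechanically: it applies the 7-chain condition of Theorem~\ref{thm:ProperPonceletChain} to six of the seven-point subchains of the octagon (so that points $6$ and $8$ each appear in several of the resulting two-monomial equations), multiplies all left and right sides, and cancels everything except the twelve surviving brackets. You instead use only \emph{one} instance of the 7-chain condition and eliminate point $6$ (never needing point $8$ at all) by invoking Darboux's centrality theorem: the central involution $\sigma$ with $\sigma(1)=5$, $\sigma(2)=6$, $\sigma(3)=7$, $\sigma(4)=8$ preserves cross-ratios, and your identity $(1,5;6,7)=(5,1;2,3)$ is exactly the incarnation that expresses $[56]/[16]$ in brackets of $1,2,3,5,7$ only; I checked that the cross-multiplication then reproduces the stated equation. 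Your route is shorter and more conceptual, and it additionally supplies the converse (the equation is quadratic in $7$ and the two generic Poncelet octagons through $1,\ldots,5$ exhaust its roots), which the paper leaves implicit. The trade-offs: the paper's argument is self-contained, resting only on its own Theorem~\ref{thm:ProperPonceletChain}, whereas yours leans on two external facts — Darboux's theorem, which the paper cites but does not prove, and the classical fact that projection from a point induces a projective involution of a conic (equivalently, that the four diagonal pairs are pairs of a single involution, which within the paper's framework would be encoded by quadset relations at the center $O$). If you want your proof to fit the paper's standard of rigor, you should either cite these facts explicitly or derive the cross-ratio identity from the quadset relations $(1,5\colon 2,6\colon 3,7)$ and $(1,5\colon 2,6\colon 4,8)$ in the style of Section~\ref{sect:BracketPoly}; and in the converse you should note that the two octagons yield distinct points $7$ (which holds because $7$ together with $1,\ldots,5$ determines the center and hence the whole octagon).
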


\begin{proof}
Observe that in a Poncelet octagon we have 7 different Poncelet 7-chains of consecutive points.
By Theorem~\ref{thm:ProperPonceletChain}, each of these chains implies an algebraic condition with just two bracket monomials. 
The following equations arise in this way:
\[
\begin{array}{rcl}
 {\color{black}[14]} [24] [37] [56] &=& [15]{\color{red}[23]}[46]{\color{red} [47] }\\[1mm]
 {\color{red}[27]}  [28]  [15] {\color{red}[34]} &=& [37] [18] [24]  {\color{black}[25]}   \\[1mm]
 [48]  {\color{red}[12]}  {\color{red}[35]} [36] &=& [38] {\color{red} [13]} [26]  {\color{red}[45]}   \\[1mm]
 [38] [46] [15] [67] &=& [37]  {\color{black}[14]} [56] [68]   \\[1mm]
  {\color{red}[14]}  {\color{red}[57]} [26] [78] &=& [48]  {\color{red}[25]} [67]  {\color{red}[17]}   \\[1mm]
  {\color{black}[25]} [68] [37] [18] &=& [15] [36] [78] [28].   \\[1mm]
 \end{array}
\]
Multiplying all left sides and right sides, and cancelling determinants that occur on both sides yields
\begin{equation}\label{eq:octagonpt7}
[12][14][27][34][35][57] =[13][17][23][25][45][47].
\end{equation}
As a matter of fact, the red brackets are those that survive.
\end{proof}
\noindent

Next, we  want to create the position of point 7 from the positions of
$1\upto 5$, and prove the correctness of the construction.

\begin{figure}[H]
\begin{center}
\includegraphics[width=.8\textwidth]{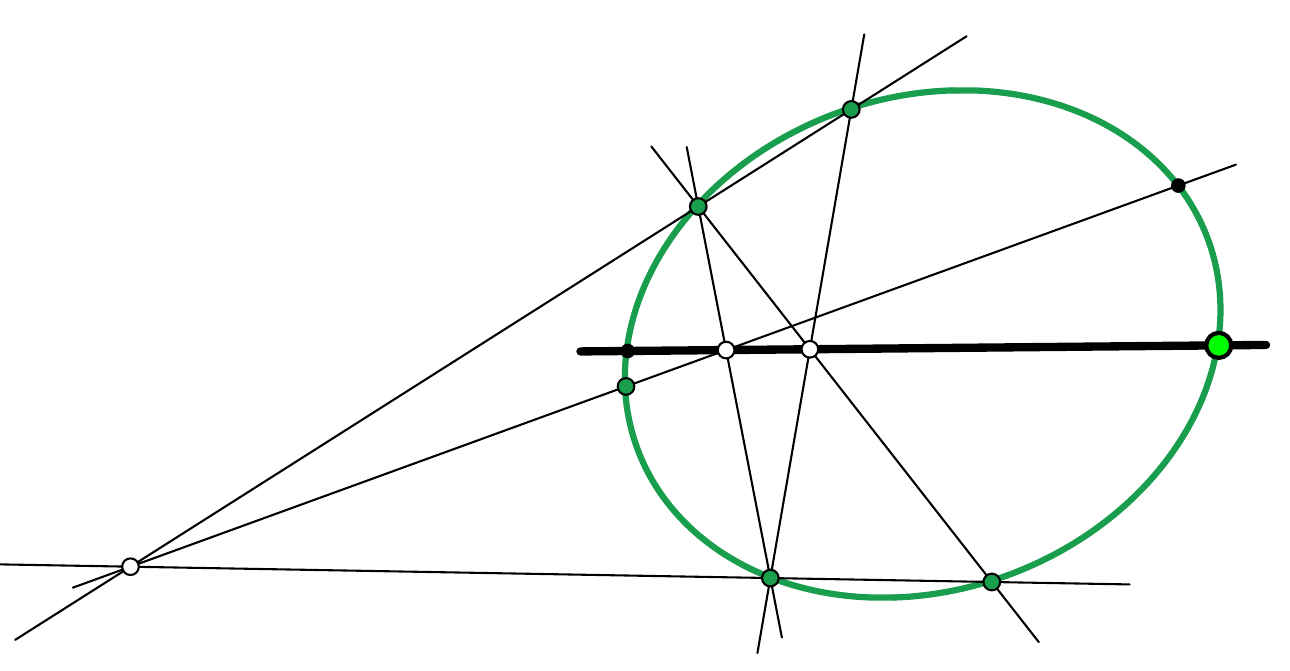}
\begin{picture}(0,0)
\put(-103,123){\footnotesize $1$}
\put(-140,100){\footnotesize $2$}
\put(-151,50){\footnotesize $3$}
\put(-28,109){\footnotesize $\overline{3}$}
\put(-123,10){\footnotesize $4$}
\put(-70,8){\footnotesize $5$}
\put(-13,75){\footnotesize $7$}
\put(-150,72){\footnotesize $\overline{7}$}
\put(-100,61){\footnotesize $P$}
\put(-131,60){\footnotesize $R$}
\put(-251,10){\footnotesize $Q$}
\end{picture}
\end{center}
\captionof{figure}{Construction of point 7 for the Poncelet 8-gon.}
\label{fig:point7for8}
\end{figure}

\begin{construction}
{\rm
The start of the construction is the five free points $1,\ldots, 5$.
Then we construct a few additional points and lines.
\begin{itemize}
\item[$P$:] intersection of $14$ and $25$,
\item[$Q$:] intersection of $12$ and $45$,
\item[$R$:] intersection of $24$ and $3Q$,
\item[$l$:] join of $R$ and $P$.
\end{itemize}

Finally, we intersect the line $l$ with the conic $C$ that passed through the initial 5 points.
This intersection will be called $7$, and it will be a suitable choice for constructing a Poncelet 8-gon.
}
\end{construction}
\medskip
\begin{theorem}
The construction above creates a valid point 7 for a Poncelet octagon.
\end{theorem}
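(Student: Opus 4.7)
The plan is to mirror the proof of Theorem \ref{constr7gon}: identify the conic $\mathcal{A}$ with $\mathbb{RP}^1$ via stereographic projection, render every constructed concurrence as a quadrilateral set on this $\mathbb{RP}^1$, and combine these quadset bracket equations so as to eliminate the auxiliary points and land on equation (\ref{eq:octagonpt7}) of Theorem \ref{Point7BracketEq}.

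First I would introduce the two auxiliary points on $\mathcal{A}$ that the construction implicitly creates: let $\overline{3}$ denote the second intersection of the line $3Q$ with $\mathcal{A}$, and let $\overline{7}$ denote the second intersection of $l = RP$ with $\mathcal{A}$. The three concurrences in the construction then translate into the following quadsets on $\mathbb{RP}^1$:
\begin{itemize}
\item at $P$: $(1,4 : 2,5 : 7,\overline{7})$;
\item at $Q$: $(1,2 : 4,5 : 3,\overline{3})$;
\item at $R$: $(2,4 : 3,\overline{3} : 7,\overline{7})$.
\end{itemize}

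Next I would select, among the six equivalent bracket forms of each quadset relation, versions chosen so that when the equations are multiplied the brackets involving $\overline{3}$ appear once on each side (and similarly for $\overline{7}$), allowing them to cancel. This is the same paired-cancellation trick used to eliminate points $8$ and $9$ in the 7-gon proof. In case direct pairing fails to clear both auxiliary points at once (since $R$ carries both of them), I would parametrise one of the auxiliary points as a linear combination $\lambda \cdot a + \mu \cdot b$ of two known points on $\mathbb{RP}^1$, solve for $\lambda : \mu$ from the relevant quadset, and expand multilinearly, exactly as was done for $\overline{6}$ in Theorem \ref{constr7gon}.

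The main obstacle, as before, is that the polynomial produced by this elimination is unlikely to be literally equation (\ref{eq:octagonpt7}); by the Second Fundamental Theorem of invariant theory, the same projective condition can be packaged in many polynomial guises differing by multiples of Grassmann--Plücker relations (\ref{eq:gpr}). The final step is thus to display an explicit linear combination of such Grassmann--Plücker relations on $\mathbb{RP}^1$, each weighted by an appropriate bracket monomial, that transforms the elimination output into equation (\ref{eq:octagonpt7}). Since that equation is of degree two in the coordinate of the last point, its two roots correspond exactly to the two intersections $7$ and $\overline{7}$ of $l$ with $\mathcal{A}$, so both choices of intersection produce a valid seventh point of a Poncelet octagon.
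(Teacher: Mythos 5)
Your proposal follows essentially the same route as the paper: introduce $\overline{3}$ and $\overline{7}$, record the three concurrences at $P$, $Q$, $R$ as quadsets on $\mathbb{RP}^1$, and eliminate the auxiliary points by the multiply-and-cancel trick. The contingency plans you hold in reserve turn out to be unnecessary: by taking one bracket form of the quadsets at $P$ and $Q$ and two suitably chosen forms of the quadset at $R$, the product cancels both $\overline{3}$ and $\overline{7}$ in one pass and lands \emph{literally} on the equation of Theorem~\ref{Point7BracketEq}, so neither the linear-combination parametrisation nor any Grassmann--Pl\"ucker correction is needed.
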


\begin{proof}
In the construction above we again consider all incidences as described by quadset relations in $\mathbb{RP}^1$.
Points $\overline{3}$ and $\overline{7}$ (the other intersections of $3Q$ and $l$ with the conic, respectively) are not part of the construction, but will be used for the algebraic characterisation.
We have the quadset relations
\begin{itemize}
\item[$P$:] $(14\colon 25\colon 7\overline{7})$ are a quadset,\\[-6mm]
\item[$Q$:] $(12\colon 45\colon  3\overline{3})$ are a quadset,\\[-6mm]
\item[$R$:] $(24\colon 3\overline{3}\colon 7\overline{7})$ are a quadset,\\[-6mm]
\end{itemize}
\noindent
from which we can derive the equations:
\[
\begin{array}{rcl}
 [4\overline{7}] {\color{red}[21]} {\color{red}[75]} &=& {\color{red}[45]} [2\overline{7}] {\color{red}[71]}  \\[1mm]
 [2\overline{3}] {\color{red}[41]} {\color{red}[35]} &=& {\color{red}[25]} [4 \overline{3}] {\color{red}[31] }  \\[1mm]
 {\color{red}[27]} {\color{red}[34]} [\overline{7}\overline{3}] &=& [2\overline{3}] [37] [\overline{7}4]   \\[1mm]
 [4\overline{3}] [37] [\overline{7}2]  &=& {\color{red}[47]} {\color{red}[32]} [\overline{7}\overline{3}].   \\[1mm]
 \end{array}
\]
Multiplying left and right sides, and cancelling terms occurring on both sides
creates exactly the characterization
\[
[12][14][27][34][35][57] =[13][17][23][25][45][47] 
\]
 that was given in Theorem~\ref{Point7BracketEq}.
\end{proof}

\subsection{Doubling} \label{sect:doubling}

Next, we briefly sketch a method that allows us to geometrically construct a Poncelet $2n$-gon from a Poncelet $n$-gon. 
Together with our methods for geometrically constructing Poncelet $n$-gons for $3\leq n\leq 8$, this allows us to produce Poncelet $(2^k\cdot m)$-gons
for each  $3\leq m\leq 8$ and $k\geq 0$. However, the doubling construction is algebraically 
more heavy then the constructions we did so far (this is why we created the octagon separately).
The initial doubling requires  constructing the intersection of two conics. Algebraically, this operation cannot be done by only solving quadratic equations. 
It is algebraically as mighty as solving polynomials of degree 3. Thus, unlike the constructions for $n\leq8$, they cannot be performed by ruler and compass alone.
For the reader who wants to create similar constructions with a dynamic geometry program, we recommend one that is capable of dealing with intersections between 
two conics in a stable manner, like for instance Cinderella \cite{RiKo99}.

\begin{figure}[H]
\begin{center}
\includegraphics[width=.75\textwidth]{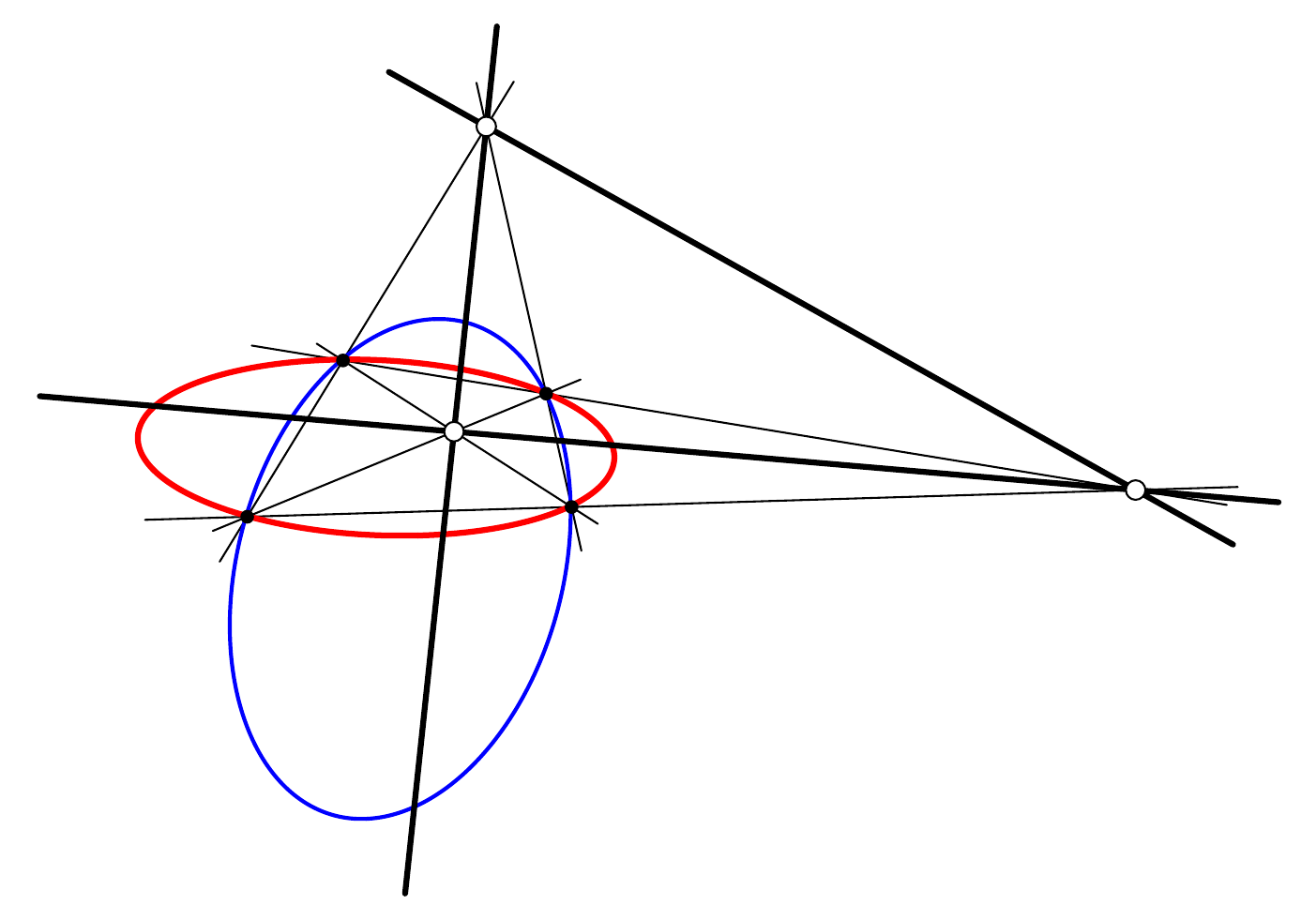}
\begin{picture}(0,0)
\put(-225,30){\footnotesize $\mathcal{A}$}
\put(-245,85){\footnotesize $\mathcal{B}$}
\put(-229,65){\footnotesize $r_1$}
\put(-149,105){\footnotesize $r_2$}
\put(-145,68){\footnotesize $r_3$}
\put(-202,114){\footnotesize $r_4$}
\put(-182,83){\footnotesize $O$}
\put(-44,70){\footnotesize $X$}
\put(-160,155){\footnotesize $Y$}
\put(-255,105){\footnotesize $x$}
\put(-180,3){\footnotesize $y$}

\end{picture}
\end{center}
\captionof{figure}{Constructing a local coordinate system.}\label{fig:rectify1}
\end{figure}

The construction for doubling that we will present without formal proof relies on the fact that by a suitable projective transformation the inner  (inscribed) conic of a Poncelet $n$-gon can  be mapped to the outer  (circumscribed) one. By choosing this transformation with care, after applying the transformation, the tangent points of the inner conic  are interleaved with the points on the outer conic  (see Figure \ref{fig:doub1}) forming a Poncelet $2n$-gon.
The following construction does the job:

\begin{construction}
Let $\mathcal{A}$ be the outer conic and $\mathcal{B}$ be the inner conic of a Poncelet polygon and assume that the points on $\mathcal{A}$ are in cyclic order $(p_1,\upto, p_n)$.
\begin{itemize}
\item[1.] Construct all four intersections $r_1\upto r_4$ of $\mathcal{A}$ and $\mathcal{B}$ (recall these may be complex).
\item[2.] Create the six lines $l_{ij}=r_i\vee r_j$.
\item[3.] Construct the three intersections 
$O=l_{12}\wedge l_{34}$,
$X=l_{13}\wedge l_{24}$,
$Y=l_{14}\wedge l_{23}$.
\item[4.] Construct the two lines 
$x=O\vee X$ and
$y=O\vee Y$.
\item[5.] Construct the two intersections
$a_{x1}$, $a_{x2}$ of $x$ with $\mathcal{A}$. Similarly, construct 
$a_{y1}$, $a_{y2}$ and $b_{x1}$, $b_{x2}$, $b_{y1}$, $b_{y2}$, the corresponding intersections with $\mathcal{B}$. 
\item[6.] Construct a projective transformation $\tau$ which is determined by
$\tau(b_{x1})= a_{x1}$,
$\tau(b_{x2})= a_{x2}$,
$\tau(b_{y1})= a_{y1}$ and
$\tau(b_{y2})= a_{y2}$.
\item[7.] Let $p_1\upto p_n$ be the points of the Poncelet polygon ordered cyclically,
and let $q_i$ be the touching point of $p_ip_{i+1}$ with $\mathcal{B}$; then
$p_1,\tau(q_1),p_2,\tau(q_2),\ldots$ forms a Poncelet $2n$-gon.

\end{itemize}
\end{construction}

\begin{figure}[t]
\begin{center}
\includegraphics[width=.75\textwidth]{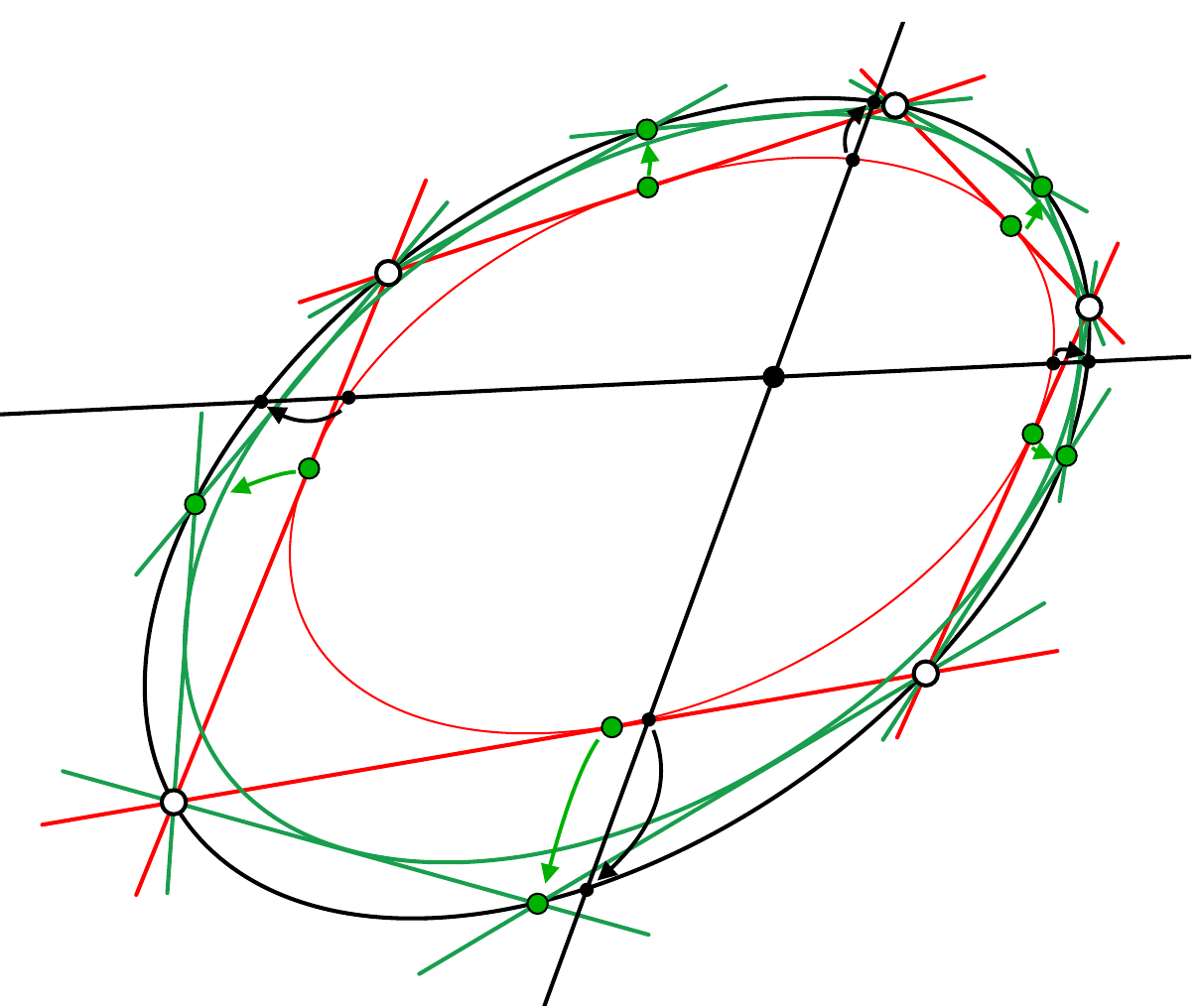}
\begin{picture}(0,0)
\put(-210,15){\footnotesize $\mathcal{A}$}
\put(-180,70){\footnotesize $\mathcal{B}$}
\put(-186,125){\footnotesize $a_{x2}$}
\put(-220,135){\footnotesize $b_{x2}$}
\put(-50,142){\footnotesize $a_{x1}$}
\put(-20,133){\footnotesize $b_{x1}$}
\put(-78,175){\footnotesize $a_{y1}$}
\put(-68,203){\footnotesize $b_{y1}$}
\put(-134,68){\footnotesize $a_{y2}$}
\put(-130,20){\footnotesize $b_{y2}$}
\put(-260,133){\footnotesize ${x}$}
\put(-140,0){\footnotesize ${y}$}
\put(-90,140){\footnotesize $O$}

\end{picture}
\end{center}
\captionof{figure}{Construction of a Poncelet 10-gon via doubling.
}\label{fig:doub1}
\end{figure}

This construction requires a few explanations. Firstly, there are several intrinsic ambiguities in the construction. Whenever we constructed intersections, we did not specify a specific order. 
This is intrinsic to the problem  and each choice leads to a proper result. 
Secondly, construction steps 1 to 4 reconstruct a kind of specific local coordinate system. The construction is shown in Figure \ref{fig:rectify1} for the case where the four intersections are real.
In general, we have to deal with the situation that some intermediate construction steps may produce complex elements. Algebraically this can be overcome easily by performing all construction 
steps in $\mathbb{CP}^2$. The dynamic geometry system Cinderella~\cite{RiKo99} fully supports complex geometric elements, so it is possible to perform these constructions with relative ease.
Thirdly, the construction requires the explicit construction of a projective transformation $\tau$ and the mapping of certain elements with respect to this transformation $\tau$. In principle, this can be reduced 
to {\it join} and {\it meet} operations, but it is complicated in practice.

Figure~\ref{fig:doub1} shows the construction for $n=10$. From a Poncelet 5-gon
(black conic and red conic),
first the local coordinate system is constructed (the construction here is all complex and not shown in the image).
It  results in the point $O$ and the lines $x$ and $y$.
 Although the intersections of the red and the black conic are complex, the points $O$ and the two axes $x$ and $y$ turn out to be real. 
Their intersection with the conics defines the projective transformation $\tau$ represented by the arrows. 
 Mapping the touching points to the conic defines the projective transformation with which the touching points (green) are mapped. 
 This results in a collection of 10 points on the conic $\mathcal{A}$ that form a Poncelet $10$-gon.

\subsection{Poncelet 9-gons}

Considering the list at the end of Section~\ref{sectalgebra}
shows that the algebraic degree necessary to construct a Poncelet 9-gon
turns out to be ``just'' three. This hints at the possibility of constructing such a polygon with the aid of intersecting two conics, an operation that algebraically corresponds to solving cubic (or equivalently quartic) equations.
Although the construction is comparably simple, proving its correctness is by far more involved. The construction and its relatives will be discussed in a companion paper \cite{RGS24}.

Assume that the 5 points  $1,2,3,5,6$ of an initial Poncelet chain $1,2,3,4,5,6$ on a conic $\mathcal{A}$ are given.  Since the construction of a Poncelet 9-gon leads to a cubic equation (compare Section~\ref{sectalgebra}), there will be in general three positions for point 4 that makes the chain the beginning of a Poncelet 9-gon. It is reasonable to assume that these three positions can be constructed as the intersection of two conics. Two conics have four intersections. So one is aiming for a conic  $\mathcal{C}$ that intersects  $\mathcal{A}$ in four points, one of which is already part of the construction and where the remaining
three intersections are the possible  positions of the remaining point. This is analogous to the algebraic situation in which a quartic equation reduces to a cubic equation if one solution is already known.

\begin{figure}[t]
\begin{center}
\includegraphics[width=.95\textwidth]{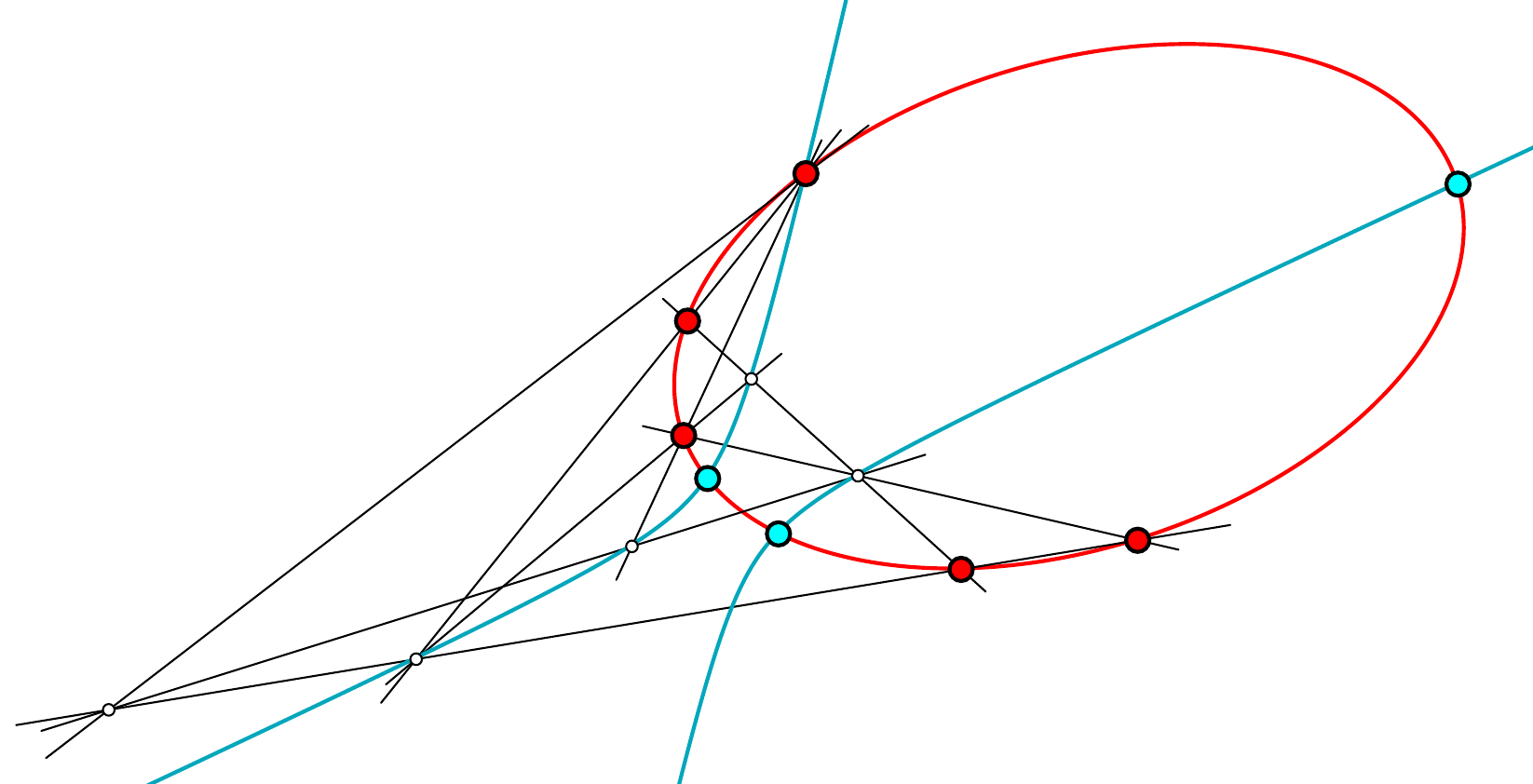}
\begin{picture}(0,0)
\put(-164,135){\footnotesize $1$}
\put(-195,96){\footnotesize $2$}
\put(-195,70){\footnotesize $3$}
\put(-176,65){\footnotesize $4_2$}
\put(-177,50){\footnotesize $4_1$}
\put(-125,36){\footnotesize $5$}
\put(-88,42){\footnotesize $6$}
\put(-17,135){\footnotesize $4_3$}
\put(-147,70){\footnotesize $P$}
\put(-163,84){\footnotesize $N$}
\put(-308,9){\footnotesize $L$}
\put(-195,45){\footnotesize $M$}
\put(-239,20){\footnotesize $Q$}

\end{picture}
\end{center}
\captionof{figure}{Construction of possible points for a Poncelet 9-gon. }\label{fig:9gon1}
\end{figure}

The following construction is the result of an exhaustive 
search among promising construction sequences that has been performed in
\cite{RGS24}. To the best of our knowledge it is the shortest construction sequence that creates a Poncelet 9-gon from 5 points in general position.
It starts with the points $1,2,3,5,6$ and constructs three possible positions $4_1,4_2,4_3$ such that for each $i=1,2,3$ the chain 
$1,2,3,4_i,5,6$ is the initial sequence of a Poncelet 9-gon.
Finding the construction is a bit intricate, and we here present the construction without a proof. For details see \cite{RGS24}.

\begin{construction}
We start with five points $1,2,3,5,6$ such that no triple is collinear in the projective plane.

\begin{itemize}
\item[$\mathcal{A}$:] conic through $1,2,3,5,6$,
\item[$P$:] intersection of $25$ and $36$,
\item[$Q$:] intersection of $12$ and $56$,
\item[$L$:] intersection of $56$ and $\mathit{tangent}(1,\mathcal{A})$,
\item[$M$:] intersection of $13$ and $LP$,
\item[$N$:] intersection of $24$ and $Q3$,
\item[$\mathcal{C}$:] conic through $1,P,Q,N,M$,
\item[$4_i$:] the three intersections of $\mathcal{A}$ and $\mathcal{C}$ other than $1$.
\end{itemize}
\end{construction}

\begin{figure}[t]
\begin{center}
\includegraphics[width=.95\textwidth]{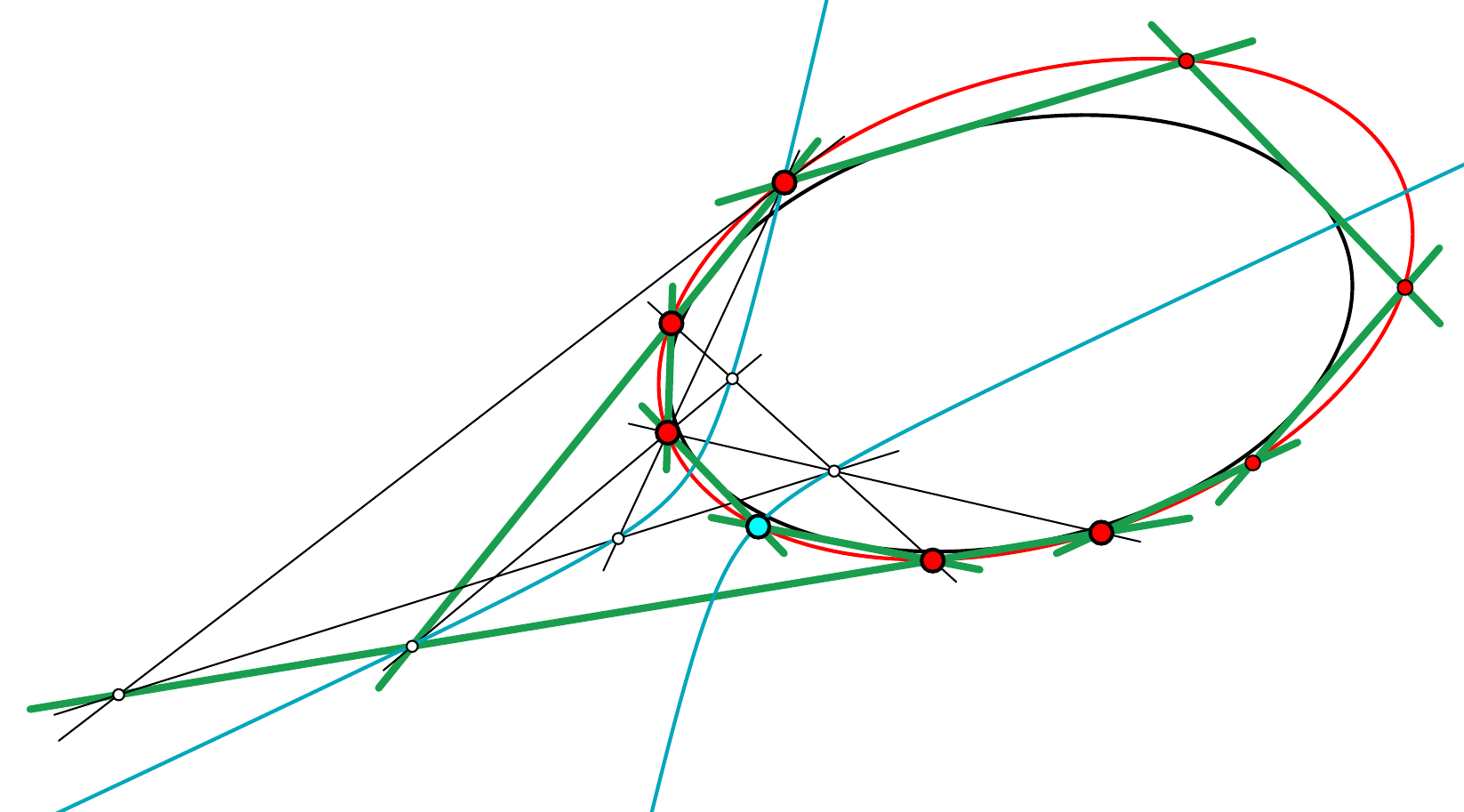}
\begin{picture}(0,0)
\put(-164,145){\footnotesize $1$}
\put(-190,106){\footnotesize $2$}
\put(-195,80){\footnotesize $3$}
\put(-175,59){\footnotesize $4$}
\put(-125,46){\footnotesize $5$}
\put(-88,52){\footnotesize $6$}
\put(-50,68){\footnotesize $7$}
\put(-10,115){\footnotesize $8$}
\put(-67,173){\footnotesize $9$}

\end{picture}
\end{center}
\captionof{figure}{Finishing the Poncelet 9-gon for one of the constructed points.}\label{fig:9gon2}
\end{figure}

Each of the constructed intersections $4_i$ can be used as an admissible choice for concluding the chain to a Poncelet 9-gon. Figure \ref{fig:9gon2} 
shows the Poncelet 9-gon for one of the constructed points.

Similar to the cases of the 7-gon and 8-gon we also here searched for a short bracket polynomial that characterises the geometric situation. The shortest one that we could find charaterises when the sequence
$1,2,3,4,5,7$ is part of a Poncelet 9-gon $1,2,3,4,5,6,7,8,9$.
It can be given as  a rank 1 condition for the points on the conic in the same spirit we used for the other characterizations
\[
\begin{array}{l@{}lr}
&  [12][14][15][27][27][34][35][35][47]\\
 -& [15][17][17][23][23][24][34][45][57]\\
 -& [12][14][17][24][25][35][37][37][45]&=\quad 0
\end{array}
\]
Again we refer to \cite{RGS24} for details, proofs and methods how to find
the constructions and bracket polynomials.

\subsection{Long Poncelet chains}

Our characterisation of creating a seventh point in a Poncelet chain after the first six are given can easily be used to
create an arbitrarily long Poncelet chain from 6 initial points. The equation 
\[[74][16][54][32]=[72][14][56][34]\]
is linear in the seventh point. Hence after $1\upto 6$ are given, its unique coordinates in $\mathbb{RP}^1$ can be calculated by
\[7=[16][54][32]\cdot 4+[14][56][34]\cdot 2.\] 
This can be easily checked by plugging this representation in the 7-chain equation.

\medskip
Nevertheless, it is also desirable to have a concrete method for geometrically constructing such a point from the initial $6$ points
on a conic. Since the above equation  only contains the elementary arithmetic operations, it can be expected that
the point can be constructed exclusively by {\it join} and {\it meet} operations. In what follows we  describe such a construction, 
prove its correctness and show how it can be used  to create arbitrarily long chains in a nice iterative way.

\medskip
Before we describe the construction, we introduce an incidence theorem that will be useful in that context: The Conic Butterfly Theorem, that has recently been proved by \cite{Iz15}. 
We give an independent bracket-theoretic proof of the theorem.

\begin{theorem}
Let $A_1,A_2,A_3,A_4$ be four distinct points on a conic $\mathcal{C}$  and let $l$ be an arbitrary line. Let the lines $A_iA_{i+1}$
intersect $l$ in points $X_i$ (with indices computed modulo $4$). If $B_1\upto B_4$ are four points on  $\mathcal{C}$  such that for 
$i=1\upto 3$ the line $B_iB_{i+1}$ passes through $X_i$, then $B_4B_1$ passes through $X_4$.
\end{theorem}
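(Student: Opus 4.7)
The plan is to transfer the entire setup to $\mathbb{RP}^1$ via stereographic projection of $\mathcal{C}$, and then exploit the involution on $\mathcal{C}$ induced by the pencil of chords through a point on $l$. Let $P, Q$ denote the two (possibly complex, possibly coincident) intersections of $l$ with $\mathcal{C}$. For any point $X$ on $l$ distinct from $P,Q$, the chords of $\mathcal{C}$ through $X$ define an involution $\sigma_X$ on $\mathcal{C}$ pairing the two intersection points of each chord with $\mathcal{C}$. The crucial observation, which drives everything, is that the chord $PQ$ itself coincides with the line $l$ and therefore passes through every $X\in l$; thus $\{P,Q\}$ is a pair of $\sigma_X$ for every such~$X$.

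With this in hand I would choose an affine coordinate on the projective line identified with $\mathcal{C}$ so that $P=0$ and $Q=\infty$. The condition ``$(P,Q)$ is a pair of $\sigma_X$'' forces $\sigma_X$ to have the form $z\mapsto c_X/z$ for some scalar $c_X$, and a pair $(a,b)$ is swapped by $\sigma_X$ precisely when $ab=c_X$. Each hypothesis of the theorem then translates into a simple algebraic identity: for $i=1,2,3$, the fact that the chords $A_iA_{i+1}$ and $B_iB_{i+1}$ meet at the common point $X_i\in l$ says that $\sigma_{X_i}$ pairs both $(A_i,A_{i+1})$ and $(B_i,B_{i+1})$, giving
\[
A_iA_{i+1} \;=\; B_iB_{i+1}, \qquad i=1,2,3.
\]
Multiplying the $i=1$ and $i=3$ equations and dividing by the $i=2$ equation yields $A_1A_4 = B_1B_4$, which is exactly the condition that $(A_4,A_1)$ and $(B_4,B_1)$ are both pairs of the same involution $\sigma_{X_4}$. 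Hence $B_4B_1$ passes through the point $X_4 = A_4A_1\cap l$, as required.

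The main (and only mild) obstacle is the degenerate case in which $l$ is tangent to $\mathcal{C}$, so that $P=Q$ and the normal form $z\mapsto c/z$ is not available. Since the conclusion is a projectively invariant algebraic identity in the coordinates of $A_1,\dots,A_4,B_1,\dots,B_4$ and $l$, one can handle this either by a Zariski-continuity argument (the identity has been verified on a Zariski-dense open set) or by redoing the computation with the coordinate normalisation $P=Q=\infty$, in which case $\sigma_X$ becomes the parabolic involution $z\mapsto 2a_X - z$ and the hypotheses read $A_i+A_{i+1}=B_i+B_{i+1}$, after which an analogous additive product-and-divide argument finishes the proof.

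If a purely bracket-theoretic derivation is desired (matching the paper's aesthetic), an equivalent route is to invoke Hesse's transfer principle on each of the three concurrences at $X_i$ (viewing $l$ as the chord $PQ$ of $\mathcal{C}$), obtaining quadset equations $(A_i,A_{i+1}:B_i,B_{i+1}:P,Q)$ for $i=1,2,3$, and then verifying by a short monomial cancellation (driven by the same product-and-divide pattern, packaged as a Grassmann--Plücker combination) that these three bracket identities imply the $i=4$ quadset equation, which by Hesse's principle in the reverse direction is precisely the desired concurrence of $B_4B_1$, $A_4A_1$, and $l$ at $X_4$.
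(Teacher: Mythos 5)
Your proof is correct and is essentially the paper's own argument: the paper likewise introduces $P,Q$ as the (possibly complex) intersections of $l$ with $\mathcal{C}$, encodes the concurrence at each $X_i$ as a quadset/bracket relation among points of $\mathcal{C}$ viewed in $\mathbb{RP}^1$, and deduces the fourth relation from the other three by the multiply-and-cancel trick --- which is exactly your product-and-divide of the involution conditions $A_iA_{i+1}=B_iB_{i+1}$ after normalising $P=0$, $Q=\infty$. Your closing ``bracket-theoretic'' variant is, in effect, the paper's proof verbatim.
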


\begin{proof}
Refer to the Figure~\ref{fig:butterfly} for the geometric situation. We introduce two additional points $P$ and $Q$ that are the intersections of $l$ with the conic. 
(They may be complex points if $l$ and $\mathcal{C}$  do not intersect, but they occur only in our calculations that work over general fields.) 
We again represent the points on the conic $A_1\upto A_4, B_1\upto B_4,P,Q$ by their $\mathbb{RP}^1$ coordinates.

\begin{figure}[H]
\begin{center}
\includegraphics[width=.65\textwidth]{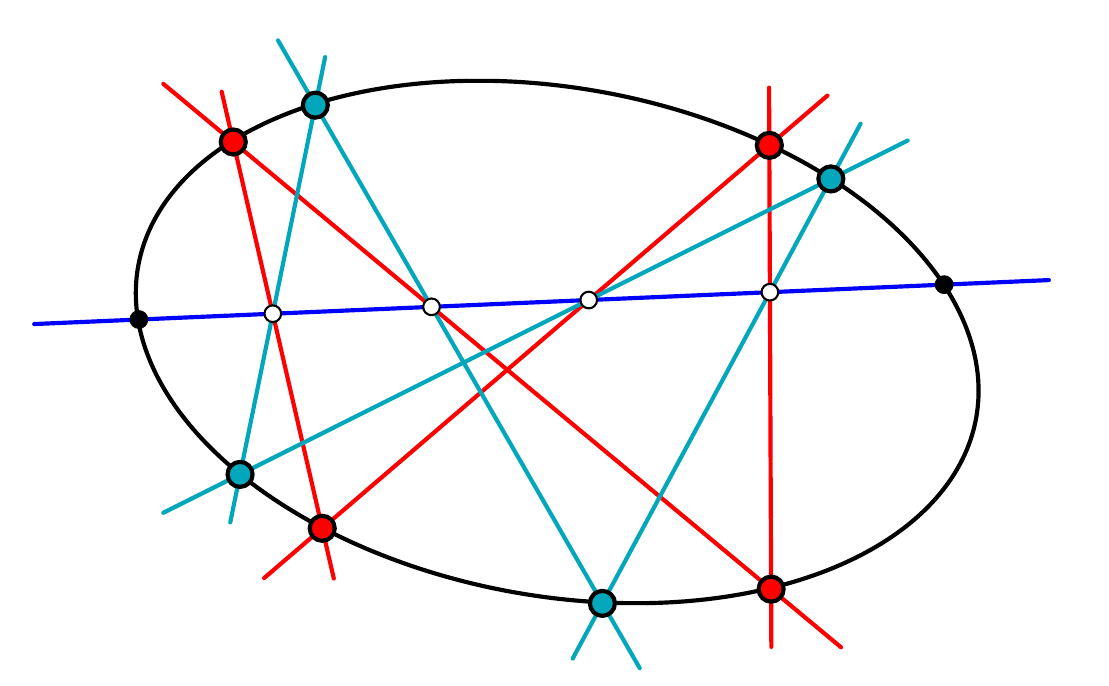}
\begin{picture}(0,0)
\put(-195,110){\footnotesize $A_1$}
\put(-178,30){\footnotesize $A_2$}
\put(-83,10){\footnotesize $A_3$}
\put(-84,117){\footnotesize $A_4$}
\put(-160,123){\footnotesize $B_1$}
\put(-195,44){\footnotesize $B_2$}
\put(-123,10){\footnotesize $B_3$}
\put(-53,97){\footnotesize $B_4$}
\put(-187,78){\footnotesize $X_1$}
\put(-139,81){\footnotesize $X_2$}
\put(-117,82){\footnotesize $X_3$}
\put(-85,82){\footnotesize $X_4$}
\put(-34,84){\footnotesize $Q$}
\put(-210,76){\footnotesize $P$}
\end{picture}
\end{center}
\captionof{figure}{The Conic Butterfly Theorem. 
}\label{fig:butterfly}
\end{figure}

At each inner point $X_i$ three lines meet. In the $\mathbb{RP}^1$ setup this  translates again to relations.
We have the following four quadsets:
\[
\begin{array}{c}
(A_1A_2\colon B_2B_1\colon Q P);\quad
(A_2A_3\colon B_3B_2\colon P Q);\\ 
(A_3A_4\colon B_4B_3\colon Q P);\quad
(A_4A_1\colon B_1B_4\colon P Q).\\
\end{array} 
\]
They can be expressed by the following four equations: 
 \[
\begin{array}{c}
[A_1B_1][QA_2][B_2P] = [B_2A_2][A_1P][QB_1] \\[1mm]
[A_2B_2][PA_3][B_3Q] = [B_3A_3][A_2Q][PB_2] \\[1mm]
[A_3B_3][QA_4][B_4P] = [B_4A_4][A_3P][QB_3] \\[1mm]
[A_4B_4][PA_1][B_1Q] = [B_1A_1][A_4Q][PB_4] \\[1mm]
\end{array} 
\]
Applying the usual multiply-and-cancel trick shows that each quadset relation can be concluded from the other three.
\end{proof}

\bigskip

Now, we make use of this result to constructively create a seventh point in a Poncelet chain from the previous ones.
The basic construction is captured by Figure~\ref{fig:chain3}.
We start with 6 points on a conic, and perform the following construction steps
(the slightly non-standard labelling will be explained later).

\begin{construction}
{\rm
Let $1,\ldots, 6$ be six distinct points on a conic $\mathcal{A}$. Construct
\begin{itemize}
\item[1.] $B_3=(1\vee 2)\wedge(3\vee 4)$
\item[2.] $G_3=(1\vee 4)\wedge(3\vee 6)$
\item[3.] $G_4=(G_3\vee B_3)\wedge(2\vee 5)$
\item[4.] $B_6=(G_3\vee B_3)\wedge(4\vee 5)$
\item[5.] $7=(G_4\vee 4)\wedge(B_6\vee 6)$
\end{itemize}
}
\end{construction}

\begin{theorem}
The last construction produces a seventh point in a Poncelet  chain.
\end{theorem}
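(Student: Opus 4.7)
The proof proceeds in two stages: first show that the constructed point $7$ lies on the conic $\mathcal{A}$ through $1,\ldots,6$, then verify the Poncelet 7-chain condition of Theorem~\ref{thm:ProperPonceletChain}.

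\emph{Stage 1 ($7\in\mathcal{A}$).} I would apply the Conic Butterfly Theorem with conic $\mathcal{A}$, base points $A_1,A_2,A_3,A_4:=1,2,5,4$, and auxiliary line $l:=G_3\vee B_3$. Direct inspection of the construction yields
\[
A_1A_2\wedge l=B_3,\quad A_2A_3\wedge l=G_4,\quad A_3A_4\wedge l=B_6,\quad A_4A_1\wedge l=G_3,
\]
so the four butterfly points are $X_1,X_2,X_3,X_4=B_3,G_4,B_6,G_3$. Let $7^{*}$ denote the second intersection of $4\vee G_4$ with $\mathcal{A}$ (distinct from $4$), and set $B_1,B_2,B_3,B_4:=3,4,7^{*},6$, all lying on $\mathcal{A}$. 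Then by the defining incidences of the construction: $B_1B_2=34$ passes through $X_1$, $B_2B_3=4\vee 7^{*}$ passes through $X_2$ (by definition of $7^{*}$), and $B_4B_1=36$ passes through $X_4$ (by definition $G_3=14\wedge 36$). Since the Butterfly Theorem permits deducing any one of the four concurrences from the other three, we conclude $B_3B_4=7^{*}\vee 6$ passes through $X_3=B_6$. Thus $7^{*}$ lies on both $4\vee G_4$ and $6\vee B_6$, so $7^{*}=7$ and $7\in\mathcal{A}$.

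\emph{Stage 2 (the 7-chain condition).} Stereographically project $\mathcal{A}$ onto $\mathbb{RP}^1$, and let $P,Q$ denote the two intersections of $l$ with $\mathcal{A}$ (possibly complex). By Hesse's transfer principle, the four concurrences at $B_3,G_3,G_4,B_6$ give four quadset relations on $\mathbb{RP}^1$:
\[
(1,2:3,4:P,Q),\quad (1,4:3,6:P,Q),\quad (2,5:4,7:P,Q),\quad (4,5:6,7:P,Q),
\]
each yielding a three-term bracket equation. Following the elimination pattern of the proofs of Theorems~\ref{constr7gon} and \ref{Point7BracketEq}, I would multiply the four equations together with an appropriate assignment of left and right sides so as to cancel all brackets containing $P$ and $Q$, invoking a Grassmann--Pl\"ucker identity on $\mathbb{RP}^1$ (as in Step~3 of the proof of Theorem~\ref{constr7gon}) to absorb any residual $P,Q$-terms. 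The resulting bracket identity would coincide with the 7-chain condition
\[
[74][16][54][32]=[72][14][56][34]
\]
of Theorem~\ref{thm:ProperPonceletChain}, which confirms that $1,2,\ldots,7$ forms a proper Poncelet chain.

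\emph{Main obstacle.} Stage 2 is the genuinely non-trivial part: identifying the precise multiplicative combination of the four quadset equations whose $P$- and $Q$-brackets cancel. Because $7$ appears only in the last two quadsets while $1,\ldots,6$ appear asymmetrically in the first two (e.g.\ $2$ and $6$ enter with unequal degree when one forms the pairwise products $Q_1Q_2$ and $Q_3Q_4$), a direct cross-multiplication is unlikely to suffice, and a Grassmann--Pl\"ucker syzygy analogous to the one used at the end of the proof of Theorem~\ref{constr7gon} is almost certainly required to complete the match with the target Poncelet 7-chain identity.
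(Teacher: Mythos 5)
Your Stage 1 is correct and is essentially the paper's own argument: your quadrilaterals $1,2,5,4$ and $3,4,7^{*},6$, with pivots $B_3,G_4,B_6,G_3$ on $l=G_3\vee B_3$, are the paper's two chains read in the opposite cyclic order, and your appeal to ``any one of the four concurrences follows from the other three'' is legitimate because the proof of the Conic Butterfly Theorem given in the paper establishes exactly that symmetric statement (each quadset relation follows from the other three by multiply-and-cancel). The genuine gap is in Stage 2, which you leave as a plan, and your diagnosis of the obstacle there is wrong in a way that matters: no Grassmann--Pl\"ucker syzygy is needed, and a direct cross-multiplication does suffice. The degree count even forces this: the four quadsets at $B_3,G_3,G_4,B_6$ are $(1,2:3,4:P,Q)$, $(1,4:3,6:P,Q)$, $(2,5:4,7:P,Q)$, $(4,5:6,7:P,Q)$, so in the product of all four relations the points $P$, $Q$ and $4$ each occur with degree four per side and every other point with degree two per side; after the $P,Q$-brackets cancel, this is exactly the degree profile of the $7$-chain condition (point $4$ quadratic per side, all others linear). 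Your worry that $2$ and $6$ ``enter with unequal degree'' is unfounded: each lies in exactly two of the four quadsets.

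What actually has to be done --- and what your proposal does not do --- is to choose, for each of the four quadsets, one of its equivalent three-bracket-equals-three-bracket forms (and an orientation of left versus right) so that every bracket containing $P$ or $Q$ occurs with the same multiplicity on the combined left and right sides. Writing $8,9$ for your $P,Q$, one admissible choice is
\[
[23][49][81]=[29][41][83],\qquad [49][61][83]=[43][69][81],
\]
at $B_3$ and $G_3$, together with
\[
[47][69][85]=[49][65][87],\qquad [29][45][87]=[27][49][85]
\]
at $B_6$ and $G_4$. Multiplying all left sides and all right sides, the multiset of brackets involving $8$ or $9$ is $\{[49]^2,[81],[83],[69],[85],[29],[87]\}$ on both sides; cancelling it leaves $[23][61][47][45]=[41][43][65][27]$, which after reversing the entries of each bracket (four sign changes per side) is exactly the $7$-chain condition $[74][16][54][32]=[72][14][56][34]$ of Theorem~\ref{thm:ProperPonceletChain}. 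The syzygy you anticipate is what the heptagon argument (Theorem~\ref{constr7gon}) needed because there the auxiliary point $\overline{6}$ could not be eliminated multiplicatively; here all auxiliary points can be. Until you exhibit such a choice of forms (or expand and verify the identity outright), Stage 2 remains an unproved assertion rather than a proof.
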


\begin{proof}
Comparing the construction with the Conic Butterfly Theorem shows that the construction is almost identical.
The only difference is that one point plays a double role since it is used by both quadrilaterals of the Butterfly Theorem.
The role of the pivoting points is played by $G_3,G_4,B_3,B_6$, and the two quadrilateral chains  are
\[
\begin{array}{l}
7\xrightarrow{G_4}4\xrightarrow{B_3}3\xrightarrow{G_3}6\xrightarrow{B_6}7\\
5\xrightarrow{G_4}2\xrightarrow{B_3}1\xrightarrow{G_3}4\xrightarrow{B_6}5.
\end{array}\]
The letters above the arrows indicate the corresponding pivot point.
The Conic Butterfly Theorem implies that the constructed point 7 is also on the conic $\mathcal{C}$.

\begin{figure}[H]
\begin{center}
\includegraphics[width=.8\textwidth]{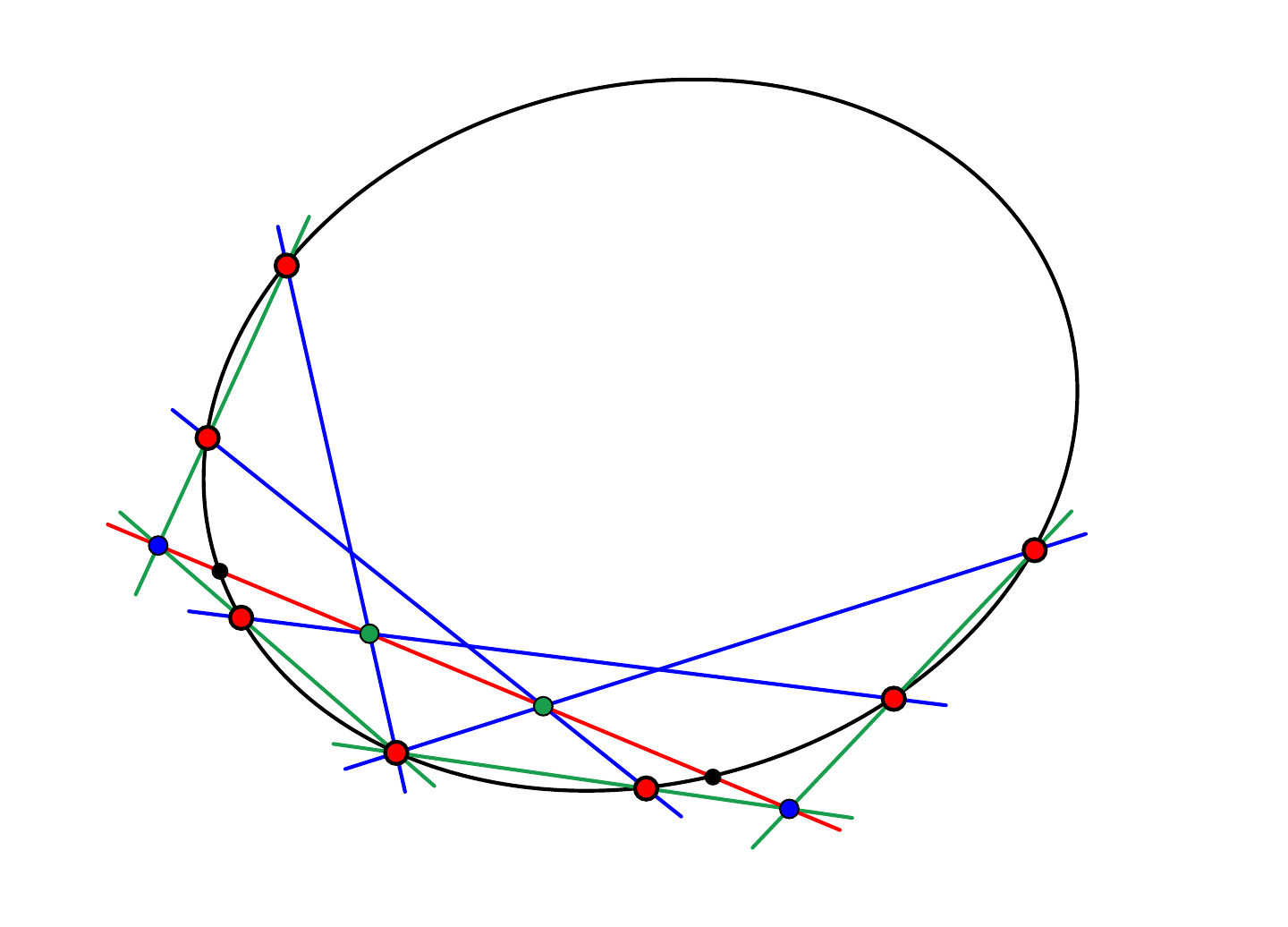}
\begin{picture}(0,0)
\put(-55,70){\footnotesize $1$}
\put(-85,40){\footnotesize $2$}
\put(-147,18){\footnotesize $3$}
\put(-200,25){\footnotesize $4$}
\put(-230,53){\footnotesize $5$}
\put(-245,100){\footnotesize $6$}
\put(-227,140){\footnotesize $7$}
\put(-113,13){\footnotesize $B_3$}
\put(-167,38){\footnotesize $G_3$}
\put(-211,54){\footnotesize $G_4$}
\put(-259,76){\footnotesize $B_6$}
\put(-229,80){\footnotesize $8$}
\put(-130,38){\footnotesize $9$}
\end{picture}
\end{center}
\kern-3mm
\captionof{figure}{Construction point 7 from $1\upto 6$.}\label{fig:chain3}
\end{figure}

\medskip
Since all points $1\upto 7$ lie on the conic, we can again represent them by their $\mathbb{RP}^1$ coordinates.
We introduce two additional points $8$ and $9$ that are the intersection of the red line and the conic. It only remains to show that the points satisfy the 7-chain condition. 
Again, the pivot points encode quadset conditions, and they in turn create the following four determinant conditions:
\[
\begin{array}{c}
{\color{red}[23]}[49][81]=[29]{\color{red}[31]}[83]\phantom{.}\\[1mm]
[49]{\color{red}[61]}[83]={\color{red}[43]}[69][81]\phantom{.}\\[1mm]
{\color{red}[47]}[69][85]=[49]{\color{red}[65]}[87]\phantom{.}\\[1mm]
[29]{\color{red}[45]}[87]={\color{red}[27]}[49][85].\\[1mm]  
\end{array} 
\]
Those that survive after the usual cancellation are marked red. They create exactly the  7-chain condition:
\[[74][16][54][32]=[72][14][56][34].\]
This is exactly what was desired.
\end{proof}

The advantage of this construction is that it can easily be iterated to create Poncelet chains of arbitrary length.
You might have observed that there are two ``missing" blue points $B_4=(2\vee 3)\wedge(4\vee 5)$ and $B_5=(4\vee 5)\wedge(5\vee 6)$. 
After adding them into the construction one can proceed iteratively by creating three more points and three more lines in each iteration step.

\begin{figure}[h]
\begin{center}
\includegraphics[width=.8\textwidth]{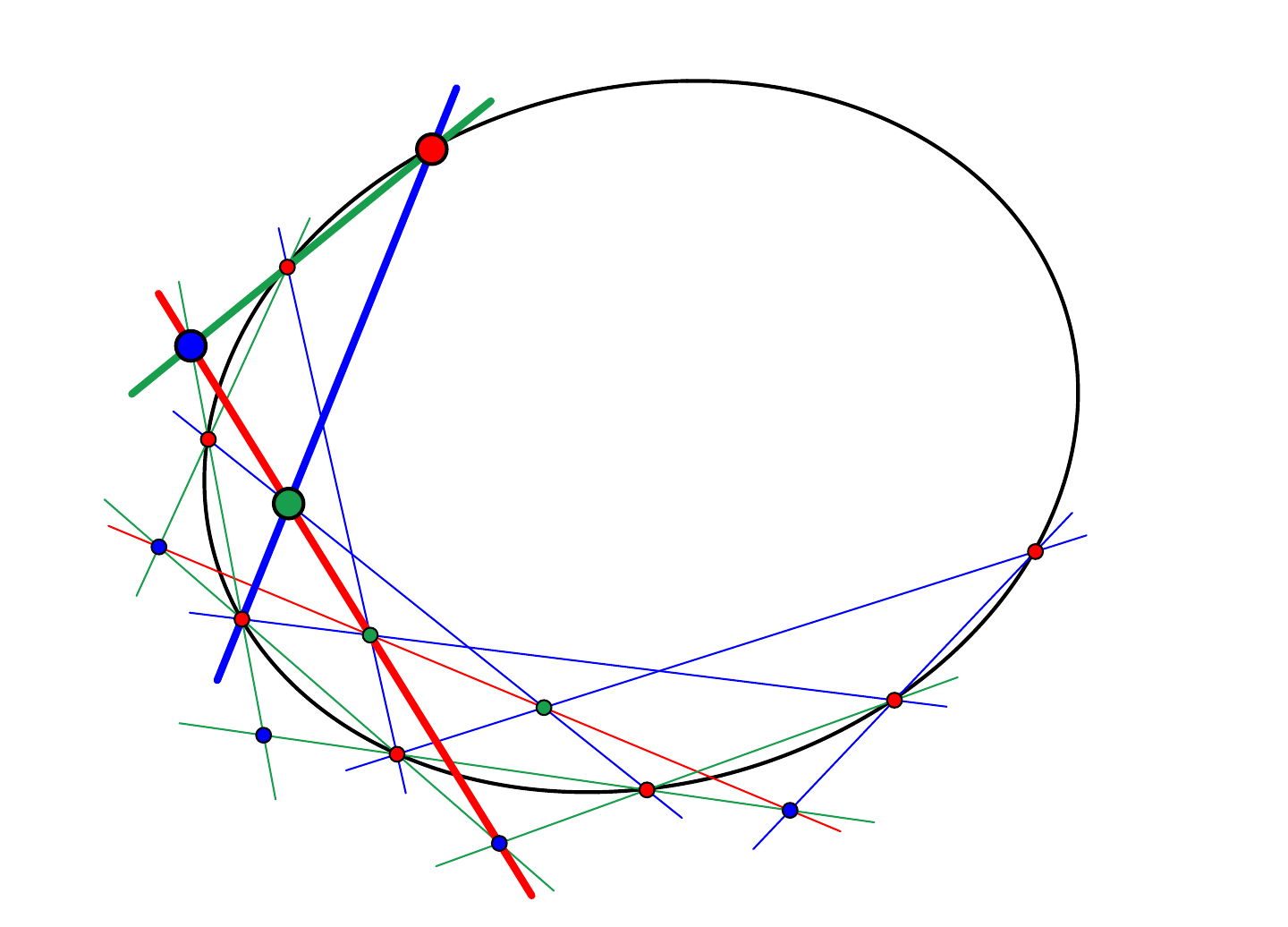}
\begin{picture}(0,0)
\put(-55,70){\footnotesize $1$}
\put(-85,40){\footnotesize $2$}
\put(-147,18){\footnotesize $3$}
\put(-200,25){\footnotesize $4$}
\put(-230,52){\footnotesize $5$}
\put(-245,100){\footnotesize $6$}
\put(-227,140){\footnotesize $7$}
\put(-113,13){\footnotesize $B_3$}
\put(-167,38){\footnotesize $G_3$}
\put(-211,54){\footnotesize $G_4$}
\put(-212,89){\footnotesize $G_5$}
\put(-259,76){\footnotesize $B_6$}
\put(-179,6){\footnotesize $B_4$}
\put(-235,33){\footnotesize $B_5$}
\put(-252,124){\footnotesize $B_7$}
\put(-229,80){\footnotesize $8$}
\put(-130,38){\footnotesize $9$}
\put(-197,170){\footnotesize $8$}
\end{picture}
\end{center}
\kern-3mm
\captionof{figure}{The iterative construction step.}
\label{fig:iterating}
\end{figure}

\begin{construction}
{\rm
Let $1\upto i$, $G_4\upto G_{i-2}$ and $B_3\upto B_{i-1}$ be constructed either by the initialisation procedure or by applying this step iteratively. 
Then we can construct $i+1$, $G_{i-2}$ and $B_i$ by
\begin{itemize}
\item[3.] $G_{i-2}=(G_{i-4}\vee B_{i-1})\wedge((i-4)\vee (i-2)$
\item[4.] $B_i=(G_{i-3}\vee B_{i-1})\wedge((i-2)\vee (i-1))$
\item[5.] $(i+1)=(G_{i-2}\vee (i-2)))\wedge(B_{i}\vee i)$
\end{itemize}
}
\end{construction}

With the color coding of our pictures in each such step a multicolored
triangle consisting of a red/green/blue point and a red/green/blue line is added.
Figure~\ref{fig:iterating} shows the construction of point 8 in the Poncelet chain, emphasising this triangle.

The procedure may be  continued for an arbitrarily long time. It may close up after $n$ steps if the initial sequence belongs to a
Poncelet $n$-gon. In this case, the construction produces a $(3n_{4})$ configuration. But it may equally well go on forever, either cycling around the conic or bouncing back and forth. 
Figure \ref{fig:cycling} shows several construction steps of a cycling situation.

\begin{figure}[H]
\begin{center}
\includegraphics[width=.9\textwidth]{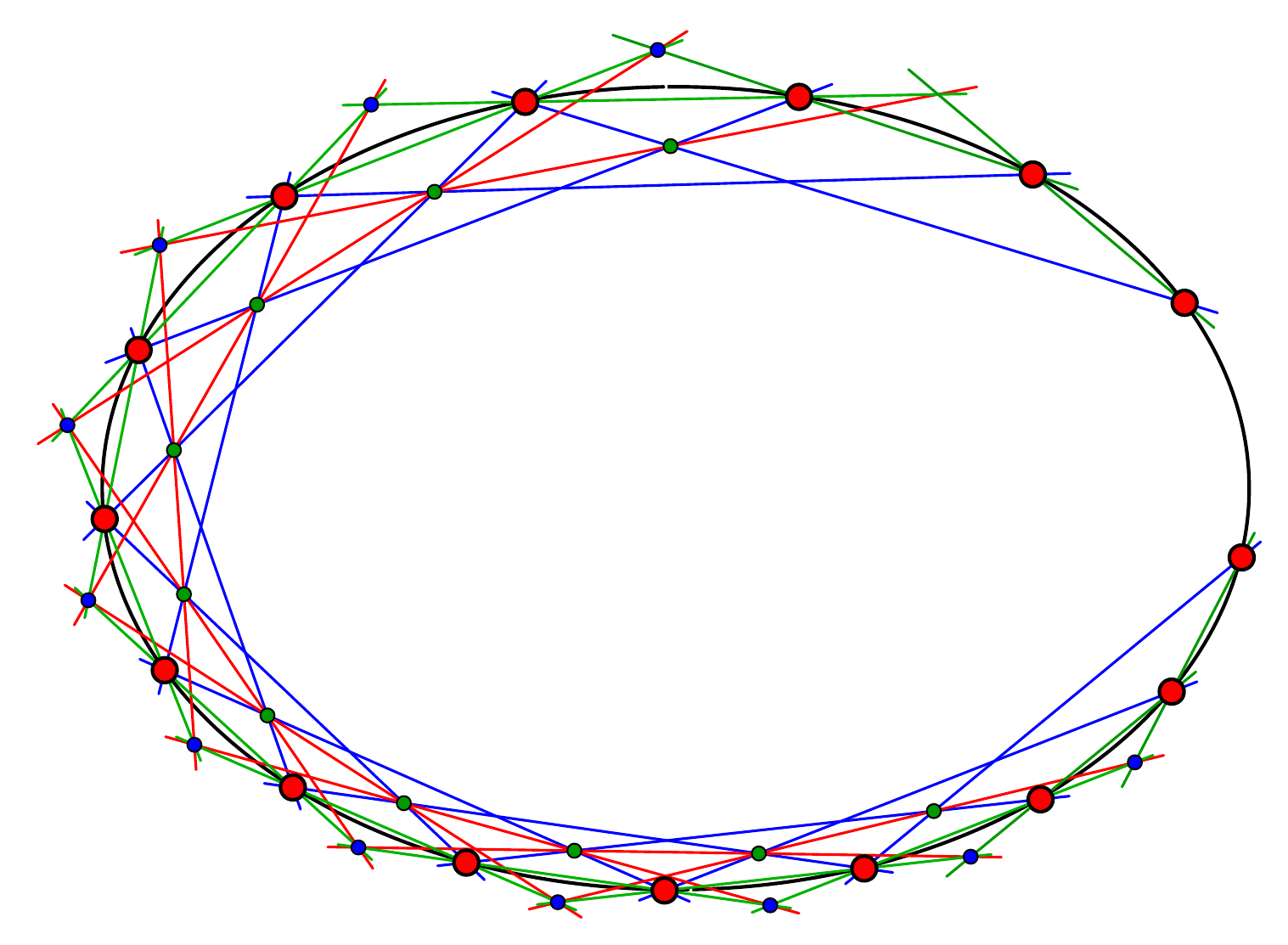}
\begin{picture}(0,0)
\end{picture}
\end{center}
\kern-3mm
\captionof{figure}{Iterating the construction.}
\label{fig:cycling}
\end{figure}

 Here are a few observations with respect to this construction
\begin{itemize}
\item Each line in the inner part of the construction contains exactly 4 points of the construction. Lines of one color contain only points of the other two colors.
\item Each point in the inner part of the construction is incident to exactly 4 lines of the construction. The lines through each point are exactly the other two colors.
\item We constructed three rings of points: red, green and blue. Not only do the red points
lie on a conic, but the blue and the green points also each lie on their own conics. One might see this by observing that from an incidence point of view the role of red/green/blue is symmetric.
\item Moreover, the blue and the green rings of points each form Poncelet chains.
\item All lines of one color are tangent to a common conic. 
\end{itemize}

Let us close with an interesting numerical experiment. We started our construction with 6 points on a conic, and then proceeded exclusively with join and meet operations. 
We could do the same join and meet construction for arbitrary initial points.
Figure \ref{fig:infty} shows what happens if we disturb the position point 6 slightly away from its place on the conic.
If one continues the construction in the other direction as well, one might consider this as a $(\infty_4)$ configuration.

\begin{figure}[H]
\begin{center}
\includegraphics[width=1\textwidth]{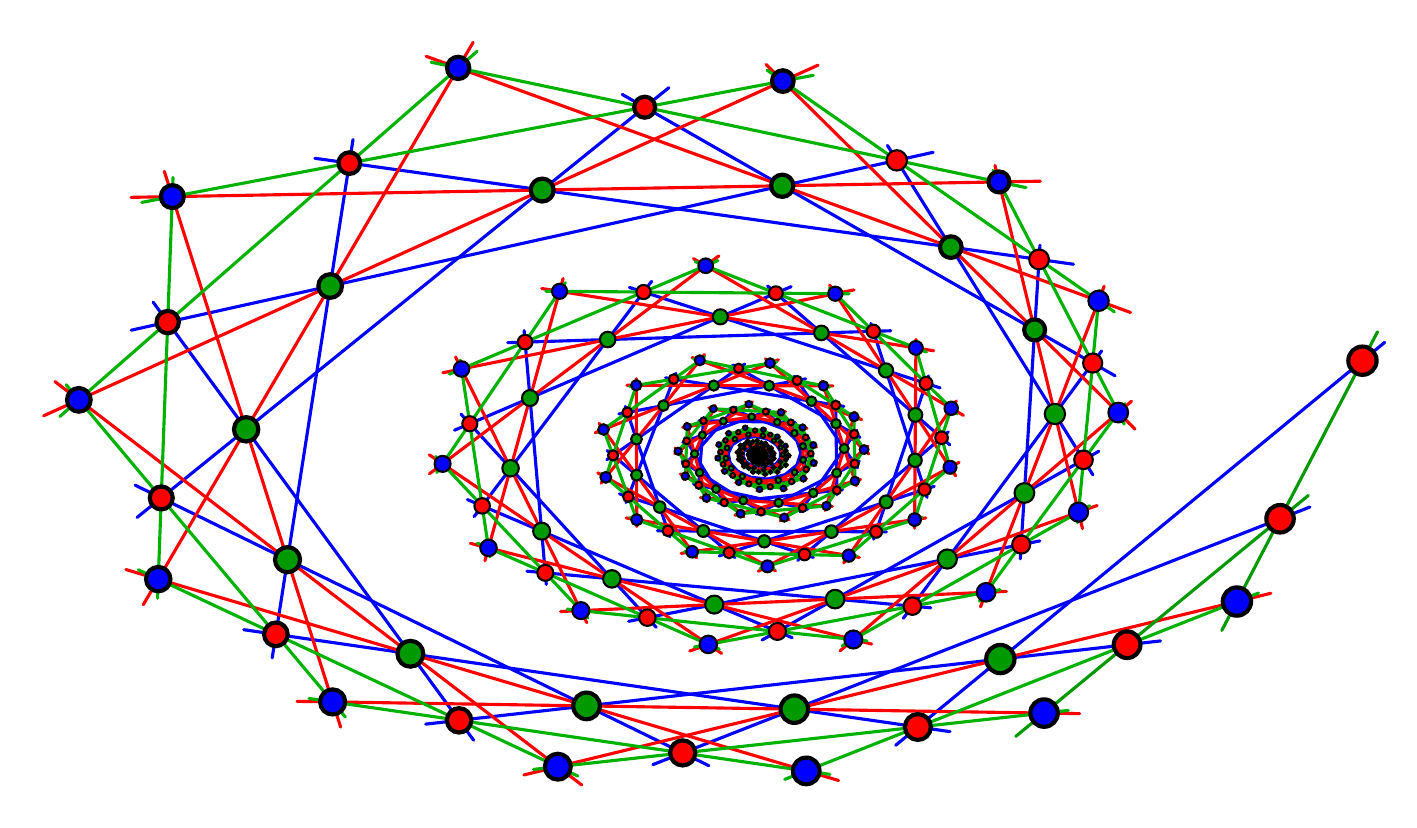}
\begin{picture}(0,0)
\end{picture}
\end{center}
\kern-3mm
\captionof{figure}{Iterating with other initial points.}
\label{fig:infty}
\end{figure}

\paragraph{\bf Acknowledgements.}

 We are grateful to A. Akopyan, Tim Reinhardt and Lena Polke for a useful discussion.
ST was supported by NSF grants DMS-2005444 and DMS-2404535, and by a Mercator fellowship within the CRC/TRR 191. 
GG
was supported by the
Hungarian National Research,
Development and Innovation Office,
OTKA Grant No. SNN 132625.

\bibliographystyle{plain} 
{\small
\bibliography{refsd}
}
\medskip

\noindent
{\footnotesize
{\sc Department of Mathematics \& Statistics, University of Alaska Fairbanks, USA}\\
Email address: {\tt lwberman@alaska.edu}

\noindent
{\sc Bolyai Institute, University of Szeged, Hungary}\\
Email address: {\tt gevay@math.u-szeged.hu}

\noindent
{\sc Department of Mathematics, Technical University of Munich, Germany}\\
Email address: {\tt richter@tum.de}

\noindent
{\sc Department of Mathematics, Penn State University, USA}\\
Email address: {\tt tabachni@math.psu.edu}

}
\end{document}